\documentclass[11pt]{article}
\usepackage{amsfonts}
\usepackage{color}
\usepackage{amsmath,amssymb,comment}
\usepackage{doi}
\usepackage{hyperref}
\usepackage[english]{babel}
\newtheorem{theo}{Theorem}[section]
\newtheorem{lem}[theo]{Lemma}

\newtheorem{prop}[theo]{Proposition}

\newcommand{\mysection}[1]{\section{#1} \setcounter{equation}{0}}
\newcommand{\proof}{{\sc Proof.} \quad}
\newcommand{\proofc}{{\sc Proof} \ }
\newcommand{\be}{\begin{equation} \label}
\newcommand{\ee}{\end{equation}}
\newcommand{\bea}{\begin{eqnarray}\label}
\newcommand{\eea}{\end{eqnarray}}
\newcommand{\bas}{\begin{eqnarray*}}
\newcommand{\eas}{\end{eqnarray*}}
\newcommand{\bit}{\begin{itemize}}
\newcommand{\eit}{\end{itemize}}
\newcommand{\qed}{\hfill$\Box$ \vskip.2cm}
\newcommand{\nn}{\nonumber}
\newcommand{\R}{\mathbb{R}}

\newcommand{\pO}{\partial\Omega}

\newcommand{\eps}{\varepsilon}

\newcommand{\io}{\int_\Omega}
\newcommand{\na}{\nabla}
\newcommand{\Del}{\Delta}

\newcommand{\al}{\alpha}

\newcommand{\pa}{\partial}
\newcommand{\bom}{\overline{\Omega}}
\newcommand{\Om}{\Omega}

\newcommand{\vs}{\vspace*}
\newcommand{\hs}{\hspace*}

\newcommand{\vp}{\varphi}
\newcommand{\lbal}{\left\{ \begin{array}{l}}
\newcommand{\lball}{\left\{ \begin{array}{ll}}
\newcommand{\ear}{\end{array} \right.}

\newcommand{\abs}{\\[5pt]}

\newcommand{\adb}{\allowdisplaybreaks}
\renewcommand{\div}{{\rm div} \,}

\newcommand{\tm}{T_{max}}

\newcommand{\ugamma}{\underline\gamma}
\newcommand{\ogamma}{\overline\gamma}

\usepackage[
  style=numeric,        
  sorting=nyt,          
  doi=true,             
  giveninits=true,
  url=false,            
  isbn=false,           
  eprint=false,
  backend=biber
]{biblatex}

\DeclareBibliographyDriver{book}{%
  \usebibmacro{bibindex}%
  \usebibmacro{begentry}%
  \usebibmacro{author/editor+others/translator+others}%
  \setunit{\labelnamepunct}\newblock
  \usebibmacro{title}%
  \newunit\newblock
  \usebibmacro{byeditor+others}%
  \newunit\newblock
  \printfield{edition}%
  \newunit\newblock
  \printlist{publisher}%
  \newunit\newblock
  \printfield{year}%
  \newunit\newblock
  \iffieldundef{url}{}{\printfield{url}\newunit\newblock}%
  \usebibmacro{finentry}%
}

\DeclareNameAlias{default}{family-given}

\newcommand{\ds}{\delta_\star^{\frac 12}}
\newcommand{\dse}{\delta_\star^{\beta}}
\begin{document}
\adb
%
%
\title{ Large time existence in a thermoviscoelastic evolution problem with mildly 
temperature-dependent parameters}
\author{
Felix Meyer\footnote{felix.meyer@math.uni-paderborn.de}\\
{\small Universit\"at Paderborn, Institut f\"ur Mathematik}\\
{\small 33098 Paderborn, Germany} }
\date{}
\maketitle
\begin{abstract}
\noindent 
We consider
\bas \label{HS}
	\lbal
	u_{tt} = (\gamma(\Theta) u_{xt})_x + a (\gamma(\Theta) u_x)_x +(f(\Theta))_x, \\[1mm]
	\Theta_t = D\Theta_{xx} + \Gamma(\Theta) u_{xt}^2 + F(\Theta) u_{xt}, 
	\ear
    \qquad \qquad (\star)
\eas
under Neumann boundary conditions for $u$ and Dirichlet boundary conditions for $\Theta$ in a bounded interval  $\Om\subset\R$. \abs
This model is a generalization of the classical system for the
description of strain and temperature evolution in
a thermo-viscoelastic material following a Kelvin-Voigt material law, in which $\gamma\equiv \Gamma$ and $f\equiv F$. Different variations of this model have already been analyzed in the past and the present study draws upon a known result concerning the existence of classical solutions, which are local in time, for suitably smooth initial data, arbitrary $a>0$, $D>0$ and $\gamma,f\in C^2([0,\infty))$ as well as $\Gamma,F\in C^1([0,\infty))$ with $\gamma>0,\Gamma\ge0$ and $F(0)=0$. \abs 
Our work focuses on proving that existence times for classical solutions can be arbitrarily large, assuming sublinear temperature dependencies of $\gamma$ and $f$, and further $|F(s)|\le C_F(1+s)^\alpha$ for some $C_F>0$ and $\alpha\in(0,1)$. 
In particular, for any given $T_\star$, initial mass $M$ and $0<\ugamma<\ogamma$, there exists a constant $\delta_\star(M,T_\star,a,D,\Om,\ugamma,\ogamma,C_F,\alpha)>0$, such that if 
$$\ugamma\le\gamma\le\ogamma \quad\mbox{ and }\quad 0\le \Gamma\le \ogamma \quad \mbox{ as well as } \quad\|\gamma'\|_{L^\infty([0,\infty))}\le \delta_\star \quad \mbox{ and }\quad  \|f'\|_{L^\infty([0,\infty))}\le \delta_\star $$
hold, the maximal existence time of the classical solution to $(\star)$ surpasses $T_\star$.
Therefore, converting $(\star)$ into a parabolic system using the substitution $v:=u_t+au$ is key to applying known methods from works on parabolic problems.

\noindent {\bf Key words:} viscous wave equation, thermoviscoelasticity, a priori estimate\\
 {\bf MSC 2020:} 74H20, 74F05, 35K55, 35B40, 35B35, 35B45, 35L05  
\end{abstract}
\newpage
\section{Introduction}\label{intro}

We consider the problem
\be{0}
	\lball

	u_{tt} = (\gamma(\Theta) u_{xt})_x + a (\gamma(\Theta) u_x)_x + (f(\Theta))_x, 
	\qquad & x\in\Om, \ t>0, \\[1mm]
	\Theta_t = D\Theta_{xx} + \Gamma(\Theta) u_{xt}^2+  F(\Theta)u_{xt},
	\qquad & x\in\Om, \ t>0, \\[1mm]
	\frac{\pa u}{\pa\nu}=0,\  \Theta(x,t)=0
	\qquad & x\in\pO, \ t>0, \\[1mm]
	u(x,0)=u_0(x), \quad u_t(x,0)=u_{0t}(x), \quad \Theta(x,0)=\Theta_0(x),
	\qquad & x\in\Om,
	\ear
\ee
which models the heat generation of acoustic waves in Kelvin-Voigt type materials on an open interval $\Om\subset \R$. The model establishes a correlation between the temperature variable, designated as $\Theta$, and the mechanical displacement variable, designated as u. This correlation is realised by means of feedback functions $\gamma$ and $f$, which describe the mechanical losses according to the Kelvin-Voigt material laws (\cite{GutierrezLemini2013}, \cite{Chawla2008}, \cite{Boley1960}). We consider $a$
 and $D$ to be positive parameters as well as $u_0,u_{0t}$ and $\Theta_0\ge0$ as given suitably smooth functions. \\
For thermoviscoelastic evolution in Kelvin-Voigt materials of arbitrary spatial dimensions, \eqref{0} can be regarded as a simplified version of 
\be{01}
	\lbal
	u_{tt} =  d\div (\gamma(\Theta) : \na^s u_t) + a\div (\gamma(\Theta) : \na^s u) + \div f(\Theta), \\[1mm]
	\zeta(\Theta)\Theta_t = D\Del\Theta + d \langle \gamma(\Theta) : \na^s u_t , \na^s u_t \rangle + \langle F(\Theta) , \na^s u \rangle,
	\ear
\ee
where $u$  is considered as a vector field on a bounded domain $\Om\subset\R^n$, where $\nabla^s u=\frac 12 (\nabla u + (\nabla u)^T)$ denotes the corresponding symmetric gradient, and where $\gamma$, $f$ and $F$ are given tensor- and matrix-valued functions. For a more detailed derivation we refer to \cite{Roubicek2009} and \cite{Claes2024}.

For variations of the more general model in the higher dimensional version \eqref{01}, the analysis of solvability on a quite basic generalized level causes such sustainable challenges that even for a temperature-
independent feedback function $\gamma$, results have been achieved only under quite restrictive assumptions e.g. on
the growth of $f$ (\cite{Blanchard2000}, \cite{Roubicek2009}) and the smallness of the initial data (\cite{Shibata1995}, \cite{Racke_1997}), or they rely on temporally local
solutions (\cite{Jiang_1990}, \cite{Bonetti2003}) and solutions in a generalized framework (\cite{Claes2024}). \\
See also \cite{Owczarek2023}, \cite{Pawlow2017}, \cite{Rossi2013}, \cite{Gawinecki2016}, \cite{Gawinecki2016a}, \cite{Cieslak2023} or \cite{Bies2023} for more results in related models.\abs

For a temperature dependent $\gamma$, even for a quasilinear variation of \eqref{01}, where $\gamma$ and $\Gamma$ are considered to be mapping from $[0,\infty)$ to $\R$ global existence of classical solutions has been established, but only by assuming suitably smooth and small initial data (\cite{Claes}). \abs

In the context of spatially one-dimensional variations of \eqref{01}, the existence of global weak (\cite{Racke_1997}) and even classical solutions has been proven, but only for $\gamma\equiv\mbox{const.}$ (\cite{Dafermos_1982}) or $\gamma\equiv\gamma(u)$
(\cite{Kawohl_1985}, \cite{Jiang_1993}). 
In the event of temperature dependencies being taken into consideration, at least in some components, the following system occurs, among others
\bas \label{HS1}
	\lbal
	u_{tt} = (\gamma(\Theta) u_{xt})_x + a u_{xx} -(f(\Theta))_x, \\[1mm]
	\Theta_t = \Theta_{xx} + \gamma(\Theta) u_{xt}^2 - f(\Theta) u_{xt}, 
	\ear
\eas

for which the existence of global weak solutions was proven even under weaker requirements on the initial data. In particular, arbitrarily large initial data were allowed and, in addition to a growth assumption on $f$, only the boundedness of $\gamma$ was required (\cite{Winklera}).

Under the stricter assumption on the boundedness of $\gamma$, that even
$\gamma_0 \le \gamma \le \gamma_0+\delta$ 
is valid for a $\delta>0$, which can be controlled, not only the existence of global classical solutions, but also the existence of stronger solutions under weaker requirements on the initial data has been established (\cite{Winkler}).
For a related system, the existence of global classical solutions was also proven for small and smooth initial data (\cite{Slemrod_1981}, \cite{Fricke2025}).

In line with this, in the present manuscript we will focus on situations where the initial data are suitably smooth, but not necessarily small and the temperature dependencies
of the core ingredients to (\ref{0}) are present but moderate.
This aligns with recent experimental works
which have revealed that in some piezoceramic materials, the elastic parameters indeed exhibit some mild but relevant
variations with respect to temperature (\cite{Friesen2024}).
Accordingly, we will examine how far appropriate assumptions on smallness of the derivatives $\gamma'$ and $f'$ 
may warrant that solutions exist at least up to some prescribed time horizon $T_\star$.\abs
Our considerations in this regard will be prepared by the following basic statement on local-in-time solvability
and extensibility, as can be obtained by a 
straightforward adaptation of a reasoning detailed in \cite{Claes} for the variant
of (\ref{0}) in which homogeneous Neumann boundary conditions are imposed for both solution components. As there, the original wave model is converted into a parabolic problem using the substitution $v:=u_t+au$. This is not only essential in the proof mentioned above, but will also come into play in our later analysis.

\begin{prop}\label{theo_loc}
  Let  $\Om\subset\R$ be an open interval, suppose that
  \be{Vorr}
	\lbal
	a>0 \mbox{ and } D>0 \mbox{ are constants, that }
	\\[1mm]
F\in C^1([0,\infty))\mbox{ satisfies } F(0)=0,\mbox{ and } |F(s)|\le C_F (1+s)^\alpha 
	\ear
  \ee 
  for some $C_F>0$ and $\alpha\in(0,1)$ and
\be{Vorr2}
	\lbal
	\gamma \in C^2([0,\infty)) \mbox{ and } \Gamma\in C^1([0,\infty))  \mbox{ are such that } \gamma>0 \mbox{ and }\Gamma\ge 0 \mbox{ on }[0,\infty), \mbox{ and that }\\[1mm]
f\in C^2([0,\infty)) 
	\ear
  \ee 
  
  hold and that
  \be{Init}
	\lbal
	u_0\in C^2(\bom)
	\mbox{ is such that $\frac{\pa u_0}{\pa\nu}=0$ on $\pO$,} \\[1mm]
	u_{0t}\in C^{1+\mu}(\bom)			
	\mbox{ is such that $\frac{\pa u_{0t}}{\pa\nu}=0$ on $\pO$, \qquad and} \\[1mm]
	\Theta_0\in C^{1+\mu}(\bom)	
	\mbox{ satisfies $\Theta_0\ge 0$ in $\Om$ and $\frac{\pa \Theta_0}{\pa\nu}=0$ on $\pO$,}
	\ear
  \ee
  with some $\mu\in (0,1)$.
  Then there exist $\tm\in (0,\infty]$ as well as functions
  \be{tl1}
	\lbal
	u\in \Big( \bigcup_{\eta\in (0,1)} C^{1+\eta,\frac{1+\eta}{2}}(\bom\times [0,\tm))\Big) \cap C^{2,1}(\bom\times (0,\tm))
		\qquad \mbox{and} \\[1mm]
	\Theta\in \Big( \bigcup_{\eta\in (0,1)} C^{1+\eta,\frac{1+\eta}{2}}(\bom\times [0,\tm))\Big) 
		\cap C^{2,1}(\bom\times (0,\tm))
	\ear
  \ee
  which are such that
  \be{tl2}
	\begin{array}{l}
	u_t\in 
	\Big( \bigcup_{\eta\in (0,1)} C^{1+\eta,\frac{1+\eta}{2}}(\bom\times [0,\tm))\Big) \cap C^{2,1}(\bom\times (0,\tm)),
	\end{array}
  \ee
  that $\Theta\ge 0$ in $\Om\times (0,\tm)$, that $(u,\Theta)$ solves \eqref{0} in the classical pointwise sense
  in $\Om\times (0,\tm)$, and which have the additional property that
  \bea{Ext}
	& & \hs{-15mm}
	\mbox{if $\tm<\infty$, \quad then \quad} \nn\\
	& & \hs{-6mm}
	\limsup_{t\nearrow\tm} \Big\{ \|u_t(\cdot,t)\|_{W^{1,2}(\Om)} + \|\Theta(\cdot,t)\|_{L^\infty(\Om)} \Big\}
		=\infty.	
  \eea
\end{prop}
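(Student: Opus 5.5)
The plan is to follow the approach the paper attributes to \cite{Claes}: rewrite \eqref{0} as a parabolic system through the substitution $v:=u_t+au$, build a local solution of that system by a fixed point argument, and derive \eqref{Ext} from a bootstrap. The only change from \cite{Claes} is the boundary condition for $\Theta$. Throughout, the hypotheses \eqref{Vorr}, \eqref{Vorr2}, \eqref{Init} are used exactly as stated: $\Theta_0\in C^{1+\mu}(\bom)$ with $\Theta_0\ge 0$ and $\frac{\pa\Theta_0}{\pa\nu}=0$, and $\Theta=0$ on $\pO$ for $t>0$. The target regularity for $\Theta$ is only that of \eqref{tl1}.

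\emph{Step 1: reduction.} Since $u_t=v-au$, the displacement is recovered from $v$ by
$$u(\cdot,t)=e^{-at}u_0+\int_0^t e^{-a(t-s)}v(\cdot,s)\,ds .$$
Because $(\gamma(\Theta)u_{xt})_x+a(\gamma(\Theta)u_x)_x=(\gamma(\Theta)v_x)_x$, the system \eqref{0} becomes
$$v_t=(\gamma(\Theta)v_x)_x+(f(\Theta))_x+av-a^2u,\qquad \Theta_t=D\Theta_{xx}+\Gamma(\Theta)(v_x-au_x)^2+F(\Theta)(v_x-au_x).$$
Here $v$ satisfies homogeneous Neumann conditions and $\Theta$ homogeneous Dirichlet conditions. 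The initial datum $v_0=u_{0t}+au_0$ lies in $C^{1+\mu}(\bom)$ and satisfies $\frac{\pa v_0}{\pa\nu}=0$.

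\emph{Step 2: local existence.} Fix $\eta\in(0,\mu)$, $R>0$ large and $T\in(0,1)$ small. Let $S$ be the closed convex set of pairs $(\tilde v,\tilde\Theta)$ with $\tilde\Theta\ge0$ and norm in $C^{1+\eta,\frac{1+\eta}2}(\bom\times[0,T])$ at most $R$, with the initial values prescribed. Given $(\tilde v,\tilde\Theta)\in S$, proceed as follows.
\begin{itemize}
\item Compute $\tilde u$ from the formula in Step 1.
\item Solve the linear divergence-form Neumann problem for $v$ with coefficient $\gamma(\tilde\Theta)$ and source $(f(\tilde\Theta))_x+a\tilde v-a^2\tilde u$. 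The coefficient is Hölder continuous and satisfies $\gamma(\tilde\Theta)\ge\min_{[0,R]}\gamma>0$, while $(f(\tilde\Theta))_x$ is Hölder. Schauder-type estimates then give $v\in C^{1+\eta',\frac{1+\eta'}2}$ for some $\eta'>\eta$.
\item Solve the linear heat equation for $\Theta$ with Dirichlet data and the bounded, Hölder source $\Gamma(\tilde\Theta)(\tilde v_x-a\tilde u_x)^2+F(\tilde\Theta)(\tilde v_x-a\tilde u_x)$. Dirichlet heat semigroup estimates control $\Theta$ in $C^{1+\eta',\frac{1+\eta'}2}$ in terms of $\|\Theta_0\|_{C^{1+\mu}}$ and $T$ times the sup of the source.
\end{itemize}
Choosing $R$ in terms of the data and then $T$ small, the map $\Phi(\tilde v,\tilde\Theta)=(v,\Theta)$ sends $S$ into itself. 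By the gain $\eta'>\eta$ and Arzel\`a–Ascoli it is compact, and it is continuous, so Schauder's theorem yields a fixed point. Interior parabolic Schauder theory then upgrades the fixed point to $C^{2,1}(\bom\times(0,T])$, which gives \eqref{tl1}–\eqref{tl2}.

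Nonnegativity of $\Theta$ is needed for the fixed point itself, since $S$ requires $\Theta\ge0$. Write $F(\Theta)=b\,\Theta$ with $b=\int_0^1F'(s\Theta)ds$ bounded, using $F(0)=0$ and $F\in C^1$. Then $\Theta$ satisfies $\Theta_t\ge D\Theta_{xx}+b(v_x-au_x)\Theta$ (this uses $\Gamma\ge0$), with $\Theta=0$ on $\pO$ and $\Theta_0\ge0$. The comparison principle gives $\Theta\ge0$. Uniqueness in this class follows from a Gronwall argument on differences, so the local solutions glue to a maximal one on $[0,\tm)$.

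\emph{Step 3: extensibility \eqref{Ext}, the main obstacle.} Assume $\tm<\infty$ while $\|u_t\|_{W^{1,2}}+\|\Theta\|_{L^\infty}\le K$ on $(0,\tm)$; I must then bound the norm used in Step 2 uniformly, so that restarting from times close to $\tm$ extends the solution. The chain of bounds is as follows.
\begin{itemize}
\item $W^{1,2}(\Om)\hookrightarrow L^\infty(\Om)$ and the ODE for $u$ bound $u$ and $u_x$ in $L^2$, hence $v$ in $W^{1,2}$.
\item $\Theta\in[0,K]$ gives uniform ellipticity, with $\gamma\ge\min_{[0,K]}\gamma$.
\item Testing the $v$-equation by $-v_{xx}$ and using the $\Theta$-equation to control $\Theta_x$ in $L^2$ through maximal regularity yields $v_x\in L^\infty((0,\tm);L^2)$ and $v_{xx}\in L^2L^2$, hence $v_x\in L^4L^\infty$ in one dimension.
\item This puts the heat source in $L^2L^1\cap L^1L^\infty$. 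One-dimensional Dirichlet heat semigroup smoothing then gives Hölder bounds for $\Theta$ and $\Theta_x$.
\item Feeding these into the $v$-equation gives Hölder bounds for $v_x$. A final Schauder step closes the bootstrap.
\end{itemize}
I expect the hardest point to be the coupling between the $v_x$ energy estimate, which involves $\gamma'(\Theta)\Theta_xv_x$, and the $\Theta_x$ estimate, which involves $v_x^2$. I would close it first with a Gagliardo–Nirenberg interpolation together with a Gronwall argument on $[0,\tm)$, exploiting that $\tm<\infty$.
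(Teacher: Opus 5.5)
Your overall route matches the paper's. The paper obtains Proposition \ref{theo_loc} by applying Lemma \ref{lem_loc} to $(u_{0t}+au_0,u_0,\Theta_0)$ and translating back through $u_t=v-au$, and it delegates the lemma to the Schauder fixed-point argument of \cite{Claes}, run with the Dirichlet instead of the Neumann heat semigroup.

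Your version of that argument has two steps that fail as written. The first is the boundary behaviour of $\Theta_0$. You keep $\frac{\pa\Theta_0}{\pa\nu}=0$ from \eqref{Init}, but that is the compatibility condition of the Neumann problem in \cite{Claes} and plays no role here. What the Dirichlet problem needs is $\Theta_0=0$ on $\pO$. Without it, $e^{tD\Del}\Theta_0$ is not even continuous at $\pO\times\{0\}$. Your claimed bound for $\Theta$ in $C^{1+\eta',\frac{1+\eta'}{2}}(\bom\times[0,T])$ in terms of $\|\Theta_0\|_{C^{1+\mu}(\bom)}$ is then false, and $\Phi$ does not map $S$ into itself. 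Indeed, continuity of $\Theta$ on $\bom\times[0,\tm)$ as in \eqref{tl1}, together with $\Theta=0$ on $\pO\times(0,\tm)$, forces $\Theta_0=0$ on $\pO$. So for data such as $\Theta_0\equiv 1$ the statement as printed cannot hold at all. The paper's proof silently uses the correct condition, since Lemma \ref{lem_loc} assumes $\Theta_0=0$ on $\pO$; you must add this hypothesis as well.

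The second gap is the invariance of the constraint $\tilde\Theta\ge 0$ built into $S$. The output $\Theta$ of your map solves a linear heat equation whose source contains $F(\tilde\Theta)(\tilde v_x-a\tilde u_x)$. This term has no sign and is not proportional to the output $\Theta$. For example, take $\Gamma\equiv 0$, $F<0$ near $0$, and $\Theta_0$ vanishing on an open set where $u_{0tx}>0$; an admissible $\tilde\Theta$ that becomes positive there for $t>0$ pushes $\Theta$ below zero. Your comparison argument writes $F(\Theta)=b\Theta$ with the \emph{output} $\Theta$, which is legitimate only at a fixed point, so as a proof of $\Phi(S)\subset S$ it is circular. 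There are two standard repairs:
\begin{itemize}
\item Drop the sign condition from $S$, evaluate $\gamma,\Gamma,f,F$ at $\tilde\Theta_+$, and obtain the fixed point. Only then run your comparison (testing by $\Theta_-$) to get $\Theta\ge 0$, which makes the truncation invisible.
\item Build the map linearly in $\Theta$, using the term $b(\tilde\Theta)(\tilde v_x-a\tilde u_x)\Theta$ with $b(s)=\int_0^1F'(\sigma s)\,d\sigma$. Comparison then applies to every output.
\end{itemize}

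A smaller point concerns Step 3. A heat source in $L^2L^1\cap L^1L^\infty$ does not give H\"older bounds for $\Theta_x$ via semigroup smoothing. The bootstrap is easier if reordered:
\begin{itemize}
\item Under the contradiction hypothesis the source is bounded in $L^\infty((0,\tm);L^1(\Om))$. The Dirichlet semigroup then directly yields $\Theta_x\in L^\infty((0,\tm);L^p(\Om))$ for all $p<\infty$, together with H\"older continuity of $\Theta$. This also dissolves the coupling you expected to be hardest.
\item Divergence-form H\"older estimates then make $v_x$ H\"older. The boundary flux $f(\Theta)=f(0)$ is constant there, so the conormal and Neumann conditions agree.
\item After that the source is bounded, and $\Theta_x$ becomes H\"older.
\end{itemize}
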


\color{black}
\vs{4mm}
{\bf Main results: Large-time existence in the presence of slowly varying $\gamma$ and $f$.} \quad
\\
Our main result now states that given any $T_\star>0$, an assumption essentially requiring the smallness of both derivatives
$\gamma'$ and $f'$ ensures that the maximal existence time identified in Proposition \ref{theo_loc} actually satisfies
$\tm\ge T_\star$.
This will be achieved by tracing the evolution of the functionals
\bas
	y(t):= 1+ \delta_\star^{\frac 12} \io \Theta_x^2+\delta_\star^{\beta} \rho \io u_x^4 +\io v_{x}^2 + 4a^2 \io u_x^2
\eas
along trajectories, with respect to $v:=u_t+au$, our previously mentioned substitution, and where $\rho>0$ is a suitably chosen number depending on $a,D$ and an upper bound $\ogamma$ for $\gamma$,
and where $\delta_\star$ is an appropriately small constant which depends not only on the above system parameters and a lower bound $\ugamma$ for $\gamma$ but moreover on $T_\star$.
Based on an inequality of the form
\bas
	y'\le \delta^\kappa \cdot y^3+C\cdot y,
\eas
valid with some conveniently small $\delta>0$ and $\kappa>0$ as well as some $C>0$ that may be large but can be favorably controlled with respect to the above system parameters (see Lemma 3.5), we will utilize an ODE comparison argument to warrant that $T_\star\le \tm$.\abs
We will thereby achieve the following main result of this study.

\color{black}
\begin{theo}\label{theo12}
  Let $\Om\subset\R$ be an interval and suppose $a,D$ and $F$ are such that \eqref{Vorr} is satisfied for some positive constants $C_F$ and $\alpha\in(0,1)$ and that the initial data $(u_0, u_{0t},\Theta_0)$ to satisfy \eqref{Init}.
  Then for every $M$, $\ugamma$, $\ogamma$ and $T_\star>0$, there exists a constant $\delta_\star=\delta_\star(M,T_\star,a,D,\Om,\ugamma,\ogamma,C_F,\alpha)>0$ such that if $u_0, u_{0t}, \Theta_0, \gamma,\Gamma$ and $f$ satisfy not only $\eqref{Vorr2}$ but also
  \be{12.2}
	\io u_{0tx}^2 + \io  u_{0x}^2 + \io  u_{0x}^{4} + \io \Theta_{0x}^2 \le M,
  \ee
and
\be{gamBesch}
  \ugamma\le\gamma(s)\le \ogamma \qquad \mbox{ and }\quad 0\le \Gamma(s)\le \ogamma\quad
  \mbox{ on }\quad [0,\infty)
  \ee
   as well as 
  \be{12.3}
\|\gamma'\|_{L^\infty([0,\infty))}\le\delta_\star\quad \mbox{ and } \quad\|f'\|_{L^\infty([0,\infty))} \le \delta_\star,\ee
  it follows that in Proposition \ref{theo_loc} actually $\tm\ge T_\star$ holds.

\end{theo}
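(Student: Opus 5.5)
The plan is to use the substitution $v:=u_t+au$ from Proposition \ref{theo_loc}, derive a differential inequality for the functional
\[
y(t):= 1+ \ds \io \Theta_x^2+\dse \rho \io u_x^4 +\io v_{x}^2 + 4a^2 \io u_x^2,
\]
and close with an ODE comparison. In terms of $v$, the first equation in \eqref{0} becomes the parabolic problem $v_t=(\gamma(\Theta)v_x)_x + (f(\Theta))_x - a u_t$, with $u_t=v-au$. The key structural point is that no derivative of $\gamma$ appears in the principal part of the $v$-equation. Derivatives of $\gamma$ and $f$ appear only through $\gamma'(\Theta)\Theta_x v_x$ and $f'(\Theta)\Theta_x$, and these carry the small factor $\delta_\star$. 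The parameters $\ugamma,\ogamma$ from \eqref{gamBesch} supply uniform parabolicity and bounds on $\Gamma$.

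The first step is a set of basic a priori estimates depending only on $M,T_\star$ and the data.
\begin{itemize}
\item Testing the $u$-equation by $u_t$ gives the energy identity. Combining it with the $\Theta$-equation, whose source is $\Gamma u_{xt}^2+F u_{xt}$ and whose growth satisfies $|F|\le C_F(1+s)^\alpha$ with $\alpha<1$, I aim for bounds on $\io\Theta$ and $\int_0^t\io u_{xt}^2$ that hold on $(0,\min\{T_\star,\tm\})$.
\item The Dirichlet condition for $\Theta$ is used here. Sublinearity of $F$ lets Young's inequality absorb $F u_{xt}$ into $\Gamma u_{xt}^2$ plus dissipation, or into a Gronwall term.
\end{itemize}

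The second step computes $\frac{d}{dt}$ of each term of $y$.
\begin{itemize}
\item $\frac{d}{dt}\io v_x^2$: test by $-v_{xx}$. This yields dissipation $\ugamma\io v_{xx}^2$ and error terms $\io \gamma'(\Theta)\Theta_x v_x v_{xx}$ and $\io f'(\Theta)\Theta_x v_{xx}$.
\item $\frac{d}{dt}\io u_x^2$ and $\frac{d}{dt}\io u_x^4$: use $u_{xt}=v_x-au_x$. The term $-a\io u_x^2$ and the weight $4a^2$ are chosen to absorb cross terms.
\item $\frac{d}{dt}\io\Theta_x^2$: test by $-\Theta_{xx}$. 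This yields $D\io\Theta_{xx}^2$ and the nasty source $\io \Gamma u_{xt}^2\Theta_{xx}$, which in one dimension is controlled by Gagliardo--Nirenberg: $\|v_x\|_{L^4}^4\le C\|v_{xx}\|_{L^2}\|v_x\|_{L^2}^3+\dots$.
\end{itemize}

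The main obstacle is balancing the weights $\ds$ and $\dse$ so that every superlinear term either is absorbed by the dissipations $\io v_{xx}^2$ and $\io\Theta_{xx}^2$ or appears with a positive power of $\delta_\star$.
\begin{itemize}
\item The $\Theta_x^2$ terms carry the prefactor $\ds$, so the quadratic source $\Gamma u_{xt}^2$ contributes $\ds\,\ogamma\|v_x\|_{L^4}^2\|\Theta_{xx}\|_{L^2}$. By Gagliardo--Nirenberg and Young, this splits into $\frac{D}{2}\ds\io\Theta_{xx}^2$, a small multiple of $\io v_{xx}^2$, and $\ds y^3$.
\item Conversely, $\gamma'\Theta_x v_xv_{xx}$ is bounded by $\delta_\star\|\Theta_x\|_{L^\infty}\|v_x\|_{L^2}\|v_{xx}\|_{L^2}$. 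Since $\|\Theta_x\|_{L^\infty}^2\le C\|\Theta_x\|_{L^2}\|\Theta_{xx}\|_{L^2}$ and $\io\Theta_x^2\le \delta_\star^{-1/2}y$, the gained $\delta_\star$ beats the loss $\delta_\star^{-1/2}$.
\item The $u_x^4$ term with weight $\dse$, with $\beta$ to be chosen for instance in $(\tfrac12,1)$, handles the terms $u_x^3 u_{xt}$. The number $\rho=\rho(a,D,\ogamma)$ fixes the constants.
\end{itemize}

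Collecting the estimates should give
\[
y'\le \delta_\star^\kappa y^3+Cy \quad\mbox{on } (0,\min\{T_\star,\tm\}),
\]
for some $\kappa>0$ and some $C$ independent of $\delta_\star$. Comparing with the solution of $z'=\delta_\star^\kappa z^3+Cz$, $z(0)=y(0)\le 1+(1+4a^2+\rho)M+C(M)$, the blow-up time of $z$ tends to $\infty$ as $\delta_\star\to0$. We may therefore choose $\delta_\star$ so small that $z$ stays bounded on $[0,T_\star]$. Then $y$ is bounded on $(0,\min\{T_\star,\tm\})$, which bounds $\|u_t\|_{W^{1,2}}$ (via $v$, $u_x$ and the first-step estimates) and $\|\Theta\|_{L^\infty}$ (via the one-dimensional embedding $W_0^{1,2}\hookrightarrow L^\infty$). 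By the extensibility criterion \eqref{Ext}, $\tm\ge T_\star$.
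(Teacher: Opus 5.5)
Your architecture is exactly the paper's: the same functional $y$ with weights $\ds$ and $\dse$, the cubic comparison ODE, and the contradiction with the extensibility criterion. However, two points in the closing estimate fail as you have set them up.

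First, the source in the $\Theta_x$-estimate is $\Gamma(\Theta)u_{xt}^2$ with $u_{xt}=v_x-au_x$, not $v_x$. The $u_x$-part produces a term $C\ds\io u_x^4$ with $C$ of order $\ogamma^2a^4/D$, after Young's inequality against $\ds\frac D4\io\Theta_{xx}^2$. The only dissipation available for this term is $2a\rho\,\dse\io u_x^4$ from the $u_x^4$-evolution. Hence you need $\beta\le\frac12$, with $\rho$ large in terms of $a$, $D$, $\ogamma$. This is why the paper takes $\beta=\frac{1-\alpha}2$ and $\rho=\frac{K_0\ogamma^2+1}{a}$. With your suggested $\beta\in(\frac12,1)$ you can only write $\ds\io u_x^4\le\rho^{-1}\delta_\star^{\frac12-\beta}y$, a linear term whose rate explodes as $\delta_\star\to0$. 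The constant in $y'\le\delta_\star^\kappa y^3+Cy$ is then no longer independent of $\delta_\star$. Consequently the blow-up time of the comparison ODE, of order $\delta_\star^{\beta-\frac12}\ln(1/\delta_\star)$, tends to $0$ instead of exceeding $T_\star$.

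Second, and this is where the hypothesis on $F$ really enters, you never treat $\ds\io F(\Theta)u_{xt}\Theta_{xx}$ in the $\Theta_x$-estimate. This is exactly the place where the paper needs $\alpha<1$. After Young's inequality the term becomes $\ds\io F(\Theta)^2u_{xt}^2$. For linearly growing $F$ the natural bound $\ds(1+\|\Theta\|_{L^\infty}^2)\io u_{xt}^2\lesssim(1+\ds\io\Theta_x^2)\io u_{xt}^2\lesssim y^2$ carries no small factor, so the comparison cannot close. The paper proceeds as follows:
\begin{itemize}
\item it splits $\ds F^2u_{xt}^2\le\dse u_{xt}^4+\delta_\star^{1-\beta}F^4$;
\item it uses $\delta_\star^{1-\beta}(1+\Theta)^{4\alpha}\le\delta_\star^{\frac{1-\beta}{\alpha}}(1+\Theta)^4+1$;
\item it applies the Poincar\'e inequality for $\Theta\in W_0^{1,4}(\Om)$ and then Gagliardo--Nirenberg.
\end{itemize}
This produces the factor $\delta_\star^{\frac2\alpha(1-\beta-\alpha)}$ in front of $y^3$, which is positive precisely because $\beta<1-\alpha$. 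Within your framework there is a shorter route. Since $\|\Theta\|_{L^\infty}\le|\Om|^{\frac12}\|\Theta_x\|_{L^2}$, one gets $\ds\io F(\Theta)^2u_{xt}^2\le C\ds(1+\|\Theta_x\|_{L^2}^{2\alpha})\,y\le Cy+C\delta_\star^{\frac{1-\alpha}2}y^3$.

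Your first step is not needed, and as described it is doubtful. Because $\Gamma\ne\gamma$ and $F\ne f$, there is no energy identity. Moreover, $Fu_{xt}$ cannot be absorbed into $\Gamma u_{xt}^2$, since $\Gamma$ may vanish. For the $L^2$-part of $\|u_t\|_{W^{1,2}}$ at the end, use instead that $\io u_t$ is conserved, as the paper does: indeed $\io u_{tt}=[f(\Theta)]_{\pO}=0$ by the boundary conditions. Combine this with Poincar\'e--Wirtinger. Your $L^\infty$-bound for $\Theta$ via $W_0^{1,2}(\Om)\hookrightarrow L^\infty(\Om)$ is fine and simpler than the paper's Duhamel argument. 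Finally, note the sign: $v_t=(\gamma(\Theta)v_x)_x+(f(\Theta))_x+au_t$.
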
{}
\mysection{Local solvability}
As in \cite{Claes} we formally substitute $v:=u_t+au$ to rewrite \eqref{0} in equivalent form as the parabolic problem
\be{0v}
	\lball
	v_t = (\gamma(\Theta)\na v)_x + av - a^2 u + (f(\Theta))_x,
	\qquad & x\in\Om, \ t>0, \\[1mm]
	u_t = v-au,
	\qquad & x\in\Om, \ t>0, \\[1mm]
	\Theta_t = D\Theta_{xx} + \Gamma(\Theta) |v_x - a u_x|^2+ F(\Theta)(v_x-au_x),
	\qquad & x\in\Om, \ t>0, \\[1mm]
	\frac{\pa v}{\pa\nu}=\frac{\pa u}{\pa\nu}=\Theta(x,t)=0,
	\qquad & x\in\pO, \ t>0, \\[1mm]
	v(x,0)=v_0(x), \quad u(x,0)=u_0(x), \quad \Theta(x,0)=\Theta_0(x),
	\qquad & x\in\Om,
	\ear
\ee

where for $a,D,f,F,\Gamma$ and $\gamma$ we assume that $\eqref{Vorr}$ and $\eqref{Vorr2}$ hold.   
 In view of the change to Dirichlet boundary conditions for $\Theta$, the proof will instead rely on Dirichlet semi-heatgroup theory, which leads to similar results, so no further adjustments will be necessary there. The proof in \cite{Claes} is based on a fixed-point approach, whereby the requirements of the applied Schauder theorem were met by utilizing scalar parabolic theory, which can be carried out analogously.  
\begin{lem}\label{lem_loc}
  Let $\Om\subset\R$ be a bounded interval and suppose that
  $a,D,\gamma,f$ and $F$ are such that \eqref{Vorr} and $\eqref{Vorr2}$ are satisfied.
  Then whenever $\mu\in (0,1)$ and 
  \be{init}
	\lbal
	v_0\in C^{1+\mu}(\bom)	
	\mbox{ such that $\frac{\pa v_0}{\pa\nu}=0$ on $\pO$,} \\[1mm]
	u_0\in C^2(\bom)			
	\mbox{ such that $\frac{\pa u_0}{\pa\nu}=0$ on $\pO$ \qquad and} \\[1mm]
	\Theta_0\in C^{1+\mu}(\bom)	
	\mbox{ such that $\Theta_0\ge 0$ in $\Om$ and $\Theta=0$ on $\pO$,}
	\ear
  \ee
  one can find $\tm\in (0,\infty]$ as well as
  \be{l1}
	\lbal
	v\in \Big( \bigcup_{\eta\in (0,1)} C^{1+\eta,\frac{1+\eta}{2}}(\bom\times [0,\tm))\Big) \cap C^{2,1}(\bom\times (0,\tm)),
		\\[1mm]
	u\in \Big( \bigcup_{\eta\in (0,1)} C^{1+\eta,\frac{1+\eta}{2}}(\bom\times [0,\tm))\Big) \cap C^{2,1}(\bom\times (0,\tm))
		\qquad \mbox{and} \\[1mm]
	\Theta\in \Big( \bigcup_{\eta\in (0,1)} C^{1+\eta,\frac{1+\eta}{2}}(\bom\times [0,\tm))\Big) 
		\cap C^{2,1}(\bom\times (0,\tm))
	\ear
  \ee
  such that $\Theta\ge 0$ in $\Om\times (0,\tm)$, and that $(v,u,\Theta)$ solves (\ref{0v}) classically 
  in $\Om\times (0,\tm)$. Moreover, $\tm$ can be chosen such that
  \bea{ext}
	& & \hs{-15mm}
	\mbox{if $\tm<\infty$, \quad then \quad} \nn\\
	& & \hs{-6mm}
	\limsup_{t\nearrow\tm} \Big\{ \|v(\cdot,t)-au(\cdot,t)\|_{W^{1,2}(\Om)} + \|\Theta(\cdot,t)\|_{L^\infty(\Om)} \Big\}
		=\infty.
  \eea
\end{lem}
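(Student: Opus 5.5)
The plan is to adapt the Schauder fixed-point construction of \cite{Claes} to the present setting, in which $\Theta$ satisfies Dirichlet rather than Neumann conditions. We fix $\mu\in(0,1)$ and data as in \eqref{init}, together with $R>\|v_0\|_{C^{1+\mu}}+\|u_0\|_{C^{2}}+\|\Theta_0\|_{C^{1+\mu}}+1$. For small $T\in(0,1)$ and some $\eta\in(0,\mu)$ we work in the closed convex set
$$S:=\Big\{(\bar v,\bar\Theta)\in \big(C^{1+\eta,\frac{1+\eta}{2}}(\bom\times[0,T])\big)^2 : \|\bar v\|+\|\bar\Theta\|\le R,\ \bar\Theta\ge0 \Big\}.$$
Given $(\bar v,\bar\Theta)\in S$, we first define $u$ by solving the ODE $u_t=\bar v-au$, $u(\cdot,0)=u_0$, explicitly:
$$u(\cdot,t)=e^{-at}u_0+\int_0^t e^{-a(t-s)}\bar v(\cdot,s)\,ds.$$
This $u$ inherits $C^{1+\eta}$ spatial regularity and the Neumann condition. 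Next, $v$ solves the linear problem
$$v_t=(\gamma(\bar\Theta)v_x)_x+av-a^2u+f'(\bar\Theta)\bar\Theta_x$$
with Neumann conditions. Finally, $\Theta$ solves
$$\Theta_t=D\Theta_{xx}+\Gamma(\bar\Theta)|\bar v_x-au_x|^2+F(\bar\Theta)(\bar v_x-au_x)$$
with Dirichlet conditions. The map is $\Phi(\bar v,\bar\Theta):=(v,\Theta)$.

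The key steps are as follows. In the $v$-equation the coefficient $\gamma(\bar\Theta)\in C^{1+\eta,\frac{1+\eta}2}$ is bounded below by a positive constant, and the source lies in $C^{\eta,\frac\eta2}$. Hence the Schauder theory for divergence-form equations (Ladyzhenskaya et al.), or alternatively Neumann heat-semigroup smoothing estimates, gives a bound for $v$ in $C^{1+\mu',\frac{1+\mu'}2}$ for some $\mu'>\eta$. For $\Theta$, the source is bounded in $L^\infty$, even in $C^{\eta,\frac\eta2}$. Dirichlet heat-semigroup estimates $\|e^{tD\Delta}\phi\|_{W^{1,\infty}}\le Ct^{-1/2}\|\phi\|_{L^\infty}$, combined with the compatibility $\Theta_0=0$ on $\pO$ and the initial regularity $C^{1+\mu}$, then yield the corresponding H\"older bound. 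Choosing $T$ small makes the Duhamel contributions small, so $\Phi(S)\subset S$, and the gap between $\mu'$ and $\eta$ gives compactness via Arzel\`a–Ascoli. Continuity of $\Phi$ follows from uniqueness for the linear problems together with this compactness. Nonnegativity of $\Theta$ is where $F(0)=0$ enters. We write the source as $\Gamma|\cdot|^2+\frac{F(\bar\Theta)}{\bar\Theta}\bar\Theta(\bar v_x-au_x)$; at the fixed point this is $\ge c\,\Theta$ with a bounded $c$, because $F\in C^1$. The comparison principle with zero boundary data then gives $\Theta\ge0$. The fixed point is upgraded to $C^{2,1}$ in the interior of the time interval by standard bootstrapping, using $\gamma,f\in C^2$.

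For the extension criterion \eqref{ext}, we define $\tm$ as the supremum of existence times and argue by contradiction. Suppose $\|v-au\|_{W^{1,2}}+\|\Theta\|_{L^\infty}$ stayed bounded near $\tm<\infty$. Then $u_x$ is bounded in $L^2$ by integrating $u_{xt}=v_x-au_x$. In one dimension, $W^{1,2}\hookrightarrow C^{1/2}$, which gives H\"older control on $u$; an $L^\infty$ bound on $u_x$ is not yet available at this stage. The missing regularity comes from parabolic theory. Testing the $v$-equation appropriately, or using maximal Sobolev regularity for the divergence-form equation whose coefficient is $\gamma(\Theta)$, we bound $v$ in $W^{1,2}$ with $v_x\in L^p$. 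This in turn bounds the $\Theta$ source in $L^p$, so $\Theta\in C^{1+\eta}$, and feeding this back gives $\gamma(\Theta)$ H\"older and $v\in C^{1+\eta}$. This yields uniform $C^{1+\eta}$ bounds at times near $\tm$. The local existence time depends only on $R$, so a restart then contradicts maximality.

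I expect this bootstrap from the weak quantities in \eqref{ext} to H\"older norms to be the main obstacle, since the coefficient $\gamma(\Theta)$ is initially only bounded and measurable in time. My first attempt would be Moser/De Giorgi-type H\"older estimates for the divergence-form equation in one dimension. After that, I would obtain $\Theta_x$ bounds via Dirichlet semigroup estimates, mirroring the Neumann argument of \cite{Claes}.
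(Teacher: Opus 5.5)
Your overall architecture (a Schauder fixed point in $C^{1+\eta,\frac{1+\eta}{2}}$ as in \cite{Claes}, with Dirichlet heat-semigroup estimates for $\Theta$) is what the paper intends; the paper only refers to that reasoning. However, two steps of your elaboration do not work as written.

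The main gap is the extensibility argument, which is circular in the order you propose. You want $v_x\in L^p$ from the $v$-equation while $\gamma(\Theta)$ is only known to be bounded and measurable. De Giorgi--Nash--Moser then gives H\"older continuity of $v$ itself, which in one dimension you essentially already have from $W^{1,2}(\Omega)\hookrightarrow C^{1/2}(\bar\Omega)$, and says nothing about $v_x$. The $L^p$-gradient (maximal Sobolev) estimates of the standard theory need some continuity of the leading coefficient, which is exactly what is missing.

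The missing idea is to treat $\Theta$ first. Under the contradiction hypothesis, $u_{xt}=(v-au)_x$ is bounded in $L^2(\Omega)$ and $\Theta$ in $L^\infty(\Omega)$. So the heat source $\Gamma(\Theta)u_{xt}^2+F(\Theta)u_{xt}$ is already bounded in $L^\infty((0,T_{max});L^1(\Omega))$. In one dimension, $\|\partial_x e^{tD\Delta}\phi\|_{L^q(\Omega)}\le Ct^{-1+\frac{1}{2q}}\|\phi\|_{L^1(\Omega)}$ is integrable near $t=0$ for every $q<\infty$. Hence $\Theta$ is bounded in $L^\infty((0,T_{max});W^{1,q}(\Omega))$ and is H\"older continuous in space-time.

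Then $\gamma(\Theta)$ and $f(\Theta)$ are H\"older, and Schauder theory for the divergence-form $v$-equation yields $v\in C^{1+\theta,\frac{1+\theta}{2}}$; note that its conormal flux $\gamma(\Theta)v_x+f(\Theta)-f(0)$ vanishes on $\partial\Omega$. Consequently $u_x\in L^\infty$ via $u_{xt}=v_x-au_x$, the heat source is bounded, and $\Theta\in C^{1+\theta,\frac{1+\theta}{2}}$. One further round gives the norms needed for the restart, including $\|u(\cdot,t_0)\|_{C^2(\bar\Omega)}$ if your existence time depends on it.

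The second problem is that your set $S$ contains the constraint $\bar\Theta\ge0$, which $\Phi$ does not preserve. The source $\Gamma(\bar\Theta)|\bar v_x-au_x|^2+F(\bar\Theta)(\bar v_x-au_x)$ has no sign: where $|\bar v_x-au_x|$ is small and has sign opposite to $F(\bar\Theta)$, the linear term dominates, and $\Gamma\equiv0$ is admissible anyway. So, e.g.\ for $\Theta_0\equiv0$, the image can become negative and $\Phi(S)\subset S$ fails. Your comparison argument works only at a fixed point, which you cannot obtain this way. There are two ways to repair this:
\begin{itemize}
\item extend $\gamma,\Gamma,f,F$ to $\mathbb{R}$ (keeping $\gamma>0$ and $\Gamma\ge0$ on the relevant range, and $F(0)=0$), drop the sign constraint, and derive $\Theta\ge0$ a posteriori exactly as you do; or
\item write the last term as $\frac{F(\bar\Theta)}{\bar\Theta}(\bar v_x-au_x)\,\Theta$, whose coefficient is bounded and continuous since $F\in C^1$ and $F(0)=0$, so that every image is nonnegative by comparison.
\end{itemize}
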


Our result on local-in-time solvability and extensibility in (\ref{0}) has thereby in fact been established already:\abs
\proofc of Proposition \ref{theo_loc}.\quad
  Assume $\tm$ and $(v,u,\Theta)$ to be as provided by Lemma \ref{lem_loc} when applied to
  $(v_0,u_0,\Theta_0):=(u_{0t}+au_0,u_0,\Theta_0)$. Then we may use (\ref{l1}), (\ref{0v}) and (\ref{ext})
  to verify (\ref{tl1}), (\ref{tl2}), (\ref{0}) and (\ref{Ext}).
\qed

 \color{black}
\mysection{Capitalizing on sublinear temperature dependencies to establish large existence times. Proof of Theorem 1.2}
From here on forward, we denote the classical solution of $\eqref{0v}$ corresponding to initial data $(u_0,u_{0t},\Theta_0)$ fulfilling $\eqref{Init}$ and $\eqref{12.2}$ as $(u,v,\Theta)$.
Bearing in mind condition \eqref{ext}, we denote some fundamental properties of the gradients of $u$, $v$ and $\Theta$, which arise from straightforward testing procedures. Subsequently, we will use our strong assumptions on $f$ and $\gamma$ – specifically their sublinear temperature dependence – to regulate the undesirable terms that emerge during the process.

Firstly, we note down some basic evolution features for the gradients of the mechanical components $u$ and $v$ resulting from testing the first two lines of the system \eqref{0v}.

\begin{lem}\label{lem3}
Assume $a,D,\gamma$ and $f$ to satisfy \eqref{Vorr} and \eqref{Vorr2}, then
 
  \be{4.1}
  \frac{1}{2} \frac{d}{dt} \io u_x^2
  + \frac{a}{2} \io  u_x^2
  \le \frac{1}{2a}\io v_x^2
  \qquad \mbox{for all } t\in (0,\tm)
  \ee
  and
  \be{4.1b}
  \frac{1}{4} \frac{d}{dt} \io u_x^{4}
  + \frac{a}{2} \io u_x^{4}
  \le \frac{8}{a^3} \io v_x^{4}
  \qquad \mbox{for all } t\in (0,\tm)
  \ee
as well as
  \bea{3.1}
	\hs{-8mm}
	\frac{1}{2}\frac{d}{dt} \io  v_x^2
	+ \frac 12\io \gamma(\Theta) v_{xx}^2 \le \io \frac{\gamma'(\Theta)^2}{\gamma(\Theta)} v_x^2\Theta_x^2 +2a\io v_x^2 + \frac { a^3}4 \io u_x^2 +\io \frac{f'(\Theta)^2}{\gamma(\Theta)} \Theta_x^2
  \eea
 for all $t\in(0,\tm)$.
\end{lem}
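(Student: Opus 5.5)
All three inequalities follow by testing the first two equations of \eqref{0v} against suitable functions and then applying Young's inequality. Throughout, I use the Neumann conditions $v_x=u_x=0$ on $\pO$ from \eqref{0v}, and the regularity in \eqref{l1}, which justifies the integrations by parts on $(0,\tm)$. No information on $\Theta$ beyond the boundedness $\gamma>0$ from \eqref{Vorr2} is needed.

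For \eqref{4.1} and \eqref{4.1b} I differentiate the second equation in \eqref{0v} with respect to $x$, which gives $u_{xt}=v_x-au_x$.

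Multiplying this by $u_x$ and integrating yields
\[
\frac12\frac{d}{dt}\io u_x^2 + a\io u_x^2 = \io u_x v_x \le \frac a2\io u_x^2+\frac1{2a}\io v_x^2 ,
\]
which is \eqref{4.1}.

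Multiplying instead by $u_x^3$ yields
\[
\frac14\frac{d}{dt}\io u_x^4 + a\io u_x^4=\io u_x^3 v_x .
\]
Young's inequality with exponents $\frac43$ and $4$ gives $u_x^3v_x\le \frac34\eps u_x^4+\frac14\eps^{-3}v_x^4$. Choosing $\eps=\frac{2a}3$, the first term becomes $\frac a2 u_x^4$. The second term carries the constant $\frac{27}{32a^3}$, which is at most $\frac8{a^3}$. This gives \eqref{4.1b}.

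For \eqref{3.1} I test the first equation of \eqref{0v} (where $\na v$ means $v_x$) with $-v_{xx}$. Since $v_x=0$ on $\pO$, the left side becomes $-\io v_tv_{xx}=\frac12\frac{d}{dt}\io v_x^2$. The right-hand side splits into four terms.

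\emph{Diffusion term.} Expanding $(\gamma(\Theta)v_x)_x=\gamma(\Theta)v_{xx}+\gamma'(\Theta)\Theta_xv_x$ gives $-\io\gamma(\Theta)v_{xx}^2-\io\gamma'(\Theta)\Theta_xv_xv_{xx}$.

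\emph{Linear term.} Integrating by parts, $-a\io vv_{xx}=a\io v_x^2$.

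\emph{Coupling term.} Integrating by parts, $a^2\io uv_{xx}=-a^2\io u_xv_x\le a\io v_x^2+\frac{a^3}4\io u_x^2$.

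\emph{Thermal forcing.} The term is $-\io f'(\Theta)\Theta_xv_{xx}$.

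The two terms containing $v_{xx}$ paired with a $\Theta_x$ factor are absorbed by Young's inequality weighted with $\gamma(\Theta)>0$:
\[
|\gamma'\Theta_xv_xv_{xx}|\le\frac{\gamma}{4}v_{xx}^2+\frac{\gamma'^2}{\gamma}v_x^2\Theta_x^2,
\qquad
|f'\Theta_xv_{xx}|\le\frac{\gamma}{4}v_{xx}^2+\frac{f'^2}{\gamma}\Theta_x^2 .
\]
Together they consume half of the dissipation, leaving $\frac12\io\gamma(\Theta)v_{xx}^2$ on the left. The constants then add up exactly to those in \eqref{3.1}.

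The only slightly delicate point is the rigorous justification of $\frac{d}{dt}\io v_x^2=-2\io v_tv_{xx}$ for $t\in(0,\tm)$. This needs enough regularity of $v_t$ up to the boundary. I would obtain it from $v\in C^{2,1}(\bom\times(0,\tm))$ together with the parabolic smoothing available from the construction in Lemma \ref{lem_loc}. Alternatively, one can argue through the integrated-in-time identity $\io v_x^2(t)-\io v_x^2(s)=-2\int_s^t\io v_tv_{xx}$, obtained via difference quotients in time. Everything else is routine.
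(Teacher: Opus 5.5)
Your proposal is correct and follows essentially the paper's own route. You test $u_{xt}=v_x-au_x$ against $u_x$ and $u_x^3$, and test the $v$-equation so as to produce $\frac12\frac{d}{dt}\io v_x^2$, using the same Young splittings; your constant $\frac{27}{32a^3}\le\frac{8}{a^3}$ is fine. The only cosmetic difference is that you test with $-v_{xx}$ instead of differentiating the equation in $x$ and pairing with $v_x$ as the paper does, which is the same computation after one integration by parts but avoids third spatial derivatives, so your difference-quotient justification needs only the $C^{2,1}$ regularity from Lemma \ref{lem_loc}.
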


\color{black}

\proof
By testing the second equation from \eqref{0v}, we obtain
\bea{22}
\frac{1}{2} \frac{d}{dt} \io u_x^2 =  \io u_x u_{xt}&=&\io u_x v_x -a\io u_x^2 \nn
  \\
  &\le& \frac1 {2a}\io v_x^2-\frac a 2 \io u_x^2
    \qquad \mbox{for all } t\in (0,\tm).
  \eea
In quite a similar fashion, \eqref{4.1b} can be accomplished. By testing the first equation in conjunctiopn with the boundary condition $v_x\big|_{\pO\times(0,\tm)}=0$ and Young's inequality, we infer that
\bea{22}
\frac{1}{2}\frac{d}{dt} \io  v_x^2 &=& \io v_x v_{xt}\nn\\
&=& \io (\gamma(\Theta) v_x)_{xx}v_x+a\io v_x^2 -a^2\io u_x v_x+\io (f(\Theta))_{xx} v_x\nn\\
&\le& - \io \gamma(\Theta) v_{xx}^2-\io \gamma'(\Theta) v_x \Theta_x v_{xx}+2a\io v_x^2 + \frac { a^3}4 \io u_x^2 -\io f'(\Theta)\Theta_xv_{xx} \nn\\
&\le& -\io \gamma(\Theta)v_{xx}^2 +\frac 12\io \gamma(\Theta) v_{xx}^2 +\io \frac{\gamma'(\Theta)^2}{\gamma(\Theta)} v_x^2\Theta_x^2 +2a\io v_x^2 + \frac { a^3}4 \io u_x^2+\io \frac{f'(\Theta)^2}{\gamma(\Theta)} \Theta_x^2\nn
\eea
for all $t\in(0,\tm)$. \qed
\color{blue}

\color{black}
For the temperature $\Theta$, we denote a similar statement, which also results from simple testing procedures. 
\begin{lem}\label{lem5}

Assume  $a,D$ and $F$ to satisfy \eqref{Vorr}. Then for every $0<\ugamma<\ogamma$ and any function  $\Gamma$ fulfilling \eqref{Vorr} and \eqref{gamBesch}, one can find $K_0=K_0(a,D,\ogamma)>0$ such that 
  \bea{5.1}
	\frac{1}{2} \frac{d}{dt} \io \Theta_x^2
	+ \frac D2 \io \Theta_{xx}^2
	&\le& K_0 \io   v_x^4+ K_0 \io  u_x^4+\io F(\Theta)^2 u_{xt}^2 
  \eea
 for all $ t\in (0,\tm)$.
\end{lem}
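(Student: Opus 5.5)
We test the third equation in \eqref{0v} against $-\Theta_{xx}$. Since $\Theta=0$ on $\pO$, also $\Theta_t=0$ there, so an integration by parts gives
\[
\frac 12\frac{d}{dt}\io\Theta_x^2=\io \Theta_x\Theta_{xt}=-\io\Theta_{xx}\Theta_t .
\]
Inserting the equation for $\Theta_t$, written via \eqref{0} as $D\Theta_{xx}+\Gamma(\Theta)u_{xt}^2+F(\Theta)u_{xt}$, we get
\[
\frac 12\frac{d}{dt}\io\Theta_x^2 + D\io\Theta_{xx}^2 = -\io \Gamma(\Theta)u_{xt}^2\Theta_{xx} - \io F(\Theta)u_{xt}\Theta_{xx}
\]
for all $t\in(0,\tm)$, where $u_{xt}=v_x-au_x$.

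Each term on the right is estimated by Young's inequality, reserving $\frac D4\io\Theta_{xx}^2$ for each.

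\emph{The $\Gamma$ term.} By \eqref{gamBesch}, $0\le\Gamma\le\ogamma$, hence
\[
\Big|\io\Gamma(\Theta)u_{xt}^2\Theta_{xx}\Big|\le\frac D4\io\Theta_{xx}^2+\frac{\ogamma^2}{D}\io u_{xt}^4 .
\]
Using $(A+B)^4\le 8A^4+8B^4$, we have $u_{xt}^4=(v_x-au_x)^4\le 8v_x^4+8a^4u_x^4$. This produces the first two terms of \eqref{5.1} with, for instance, $K_0:=\frac{8\ogamma^2}{D}\max\{1,a^4\}$, which depends only on $a,D,\ogamma$.

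\emph{The $F$ term.} Here
\[
\Big|\io F(\Theta)u_{xt}\Theta_{xx}\Big|\le\frac D4\io\Theta_{xx}^2+\frac1D\io F(\Theta)^2u_{xt}^2 .
\]
This gives $\frac1D\io F(\Theta)^2u_{xt}^2$, whereas the statement has coefficient $1$. To match it, we rescale the Young splitting, e.g.\ absorb $\frac D4$ but weight so that the remainder coefficient is at most $1$; this is possible when $D\ge 1$. In general the constant in front of this term should really be $\frac1D$. I would either keep $\frac 1D$ (noting that it is harmless, since this term is later controlled via the sublinear growth of $F$) or use a sharper split.

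\emph{Conclusion.} Combining the two estimates leaves $\frac D2\io\Theta_{xx}^2$ on the left, which yields \eqref{5.1}. The only delicate point is the bookkeeping of the constant in front of $\io F(\Theta)^2u_{xt}^2$, since it does not match the stated coefficient for small $D$. The boundary term in the first integration by parts vanishes because of the Dirichlet condition, and the regularity in \eqref{l1} justifies the computation on $(0,\tm)$.
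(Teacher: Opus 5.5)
Your argument is the paper's own: use $\Theta_t=0$ on $\pO$ to integrate by parts, then apply Young's inequality to the $\Gamma$- and $F$-terms against portions of $D\io\Theta_{xx}^2$. Your $K_0=\frac{8\ogamma^2}{D}\max\{1,a^4\}$ is in fact more accurate than the paper's $\frac{\ogamma^2}{D}(1+a^4)$, which drops the factor $8$ from $(A+B)^4\le 8A^4+8B^4$.

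Your concern about the last term is justified, and the paper's proof makes the same slip by writing coefficient $1$ there. Young's inequality with your split gives only $\frac1D\io F(\Theta)^2u_{xt}^2$. Coefficient $1$ is reachable this way only for $D>\frac12$. For $D<\frac12$ the inequality as stated can genuinely fail: take $\Gamma\equiv0$ and states where $F(\Theta)u_{xt}\approx-\frac12\Theta_{xx}$ while $\io v_x^4+\io u_x^4$ is negligible. Such states are admissible because (Vorr) does not bound $F'$ and $K_0$ must not depend on $F$. So you should keep the factor $\frac1D$ rather than look for a sharper split. This is harmless, since it only enlarges the $D$-dependent constant $K_1$ in Lemma~\ref{lem8}.
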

\proof
 Since  $\Theta(\cdot,t)|_{\pO}=0$ for all $t\in(0,\tm)$, we infer $\Theta_t=0$ on $\pO$. Furthermore, by applying Young's inequality we obtain
\bea{1t}
\frac 1 2\frac{d}{dt} \io \Theta_x^2 &=& \io \Theta_x \Theta_{xt}\nn\\
&=& \io \Theta_x\cdot\Big(D\Theta_{xx}+\Gamma(\Theta)|v_x-au_x|^2+F(\Theta)u_{xt}\Big)_x\nn\\
&=& -D\io \Theta_{xx}^2-\io \Theta_{xx} \Big( \Gamma(\Theta)|v_x-au_x|^2+F(\Theta)u_{xt} \Big)\nn\\
& &+\Big[\Theta_x \Big(D\Theta_{xx}+\Gamma(\Theta)|v_x-au_x|^2+F(\Theta)u_{xt}\Big)\Big]_{\pO}\nn\\
&\le& -\frac D 2 \io \Theta_{xx}^2 + \frac {\ogamma^2} {D}\io (v_x^4 +a^4 u_x^4) +\io F(\Theta)^2 u_{xt}^2\nn
\eea
for all $t\in(0,\tm).$ The claim follows for $K_0:=\frac{\ogamma^2}D (1+a^4)$.

We now finalize our preparation for the main comparison argument by taking advantage of the fact that we can control the coefficient of the cubic term in the underlying ODE inequality for our energy functionals. Furthermore, it becomes evident that the restrictive assumption $F(s)\le C_F(1+s)^\alpha$, for some $C_F>0$ and $\alpha\in(0,1)$, as indicated in \eqref{Vorr}, is indispensable.

\begin{lem}\label{lem8}
 Assume that $a,D$ and $F$ satisfy \eqref{Vorr} for positive constants $C_F$ and $\alpha\in(0,1)$. Then, for every $0<\ugamma<\ogamma$, there exist constants $k_1=k_1(a,D,\ugamma)$ and $K_1=K_1(\Om,a,D,\ogamma,\ugamma,C_F,\alpha)$ such that, for any $\delta_\star\in(0,1)$ and all functions $\gamma$, $\Gamma$ and $f$, which not only fulfill \eqref{Vorr2}, but also \eqref{gamBesch} as well as \eqref{12.3}, the inequalities
 \bea{7.3}
	& & \hs{-20mm}
	\frac{d}{dt} \bigg\{ \io v_x^2 + 4a^2 \io u_x^2 \bigg\}
	+ k_1\io  v_{xx}^2
	+ k_1 \io u_x^2 \nn\\
	&\le& \delta_\star^\frac{7}{2}K_1\io \Theta_x^4
	+ \delta_\star^\frac{1}{2} K_1 \io v_x^4+ 8a \io v_x^2  + \delta_\star^2 K_1 \io  \Theta_x^2,
  \eea
 and 
 \bea{8.1}	\hs{-10mm}
	\frac{d}{dt} \bigg\{\ds\io \Theta_x^2 + \dse\rho\io u_x^{4} \bigg\}
	+  \ds k_1 \io \Theta_{xx}^2
	+ \dse k_1  \io u_x^{4} 
	&\le& \dse K_1 \io v_x^4 + \delta_\star^{\frac{1-\beta}\alpha} K_1\io \Theta_x^4 +K_1
		  \eea
  hold for all $t\in(0,\tm)$, where
  $\beta\equiv\beta(\alpha):=\frac{1-\alpha}{2},\ \rho\equiv \rho(a,b,D,\ogamma):=\frac{K_0\ogamma^2+1}a$. 
\end{lem}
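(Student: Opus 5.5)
The plan is to combine Lemma \ref{lem3} and Lemma \ref{lem5} with suitable weights. Wherever the small factors $\gamma'$ and $f'$ appear, we use \eqref{12.3} to replace them by $\delta_\star$. The sublinear growth of $F$ then absorbs the remaining bad term through Young's inequality with $\delta_\star$-dependent weights.

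\textbf{First inequality \eqref{7.3}.} We add \eqref{3.1} (multiplied by $2$) and $4a^2$ times \eqref{4.1}.
\begin{itemize}
\item On the left this produces $\ugamma\io v_{xx}^2 + 2a^3\io u_x^2$. From \eqref{3.1} we also get $\frac{a^3}{2}\io u_x^2$ on the right, which is absorbed into half of $2a^3\io u_x^2$.
\item The contributions $4a\io v_x^2$ from \eqref{3.1} and $4a\io v_x^2$ from $4a^2\cdot\frac1{2a}\io v_x^2$ combine to $8a\io v_x^2$.
\item The $f'$-term is bounded by $\frac{2\delta_\star^2}{\ugamma}\io\Theta_x^2$.
\item The $\gamma'$-term is at most $\frac{2\delta_\star^2}{\ugamma}\io v_x^2\Theta_x^2$. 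By Young's inequality this is bounded by $\frac{\delta_\star^2}{\ugamma}\bigl(\delta_\star^{-1/2}\io v_x^4+\delta_\star^{1/2}\io\Theta_x^4\bigr)$, giving powers $\delta_\star^{3/2}\le\delta_\star^{1/2}$ and $\delta_\star^{5/2}$.
\end{itemize}
The last estimate is even better than the stated $\delta_\star^{7/2}$ after rebalancing: choosing the weight $\delta_\star^{3/2}$ instead yields exactly $\delta_\star^{1/2}\io v_x^4$ and $\delta_\star^{7/2}\io\Theta_x^4$. So \eqref{7.3} follows with $k_1=\min\{\ugamma,a^3\}$ and $K_1$ collecting $2/\ugamma$.

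\textbf{Second inequality \eqref{8.1}.} We add $2\ds$ times \eqref{5.1} to $4\dse\rho$ times \eqref{4.1b}. This gives on the left $D\ds\io\Theta_{xx}^2+2a\rho\dse\io u_x^4$. On the right we get $2K_0\ds\io v_x^4 + 2K_0\ds\io u_x^4 + \frac{32\rho}{a^3}\dse\io v_x^4 + 2\ds\io F(\Theta)^2u_{xt}^2$. Since $\beta=\frac{1-\alpha}{2}<\frac12$, we have $\ds\le\dse$. Hence $2K_0\ds\io u_x^4\le 2K_0\dse\io u_x^4$, which is absorbed by choosing $\rho$ large; the stated $\rho=(K_0\ogamma^2+1)/a$ is of this kind, with the constant factors adjusted. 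The term $\ds\io v_x^4$ is likewise bounded by $\dse\io v_x^4$.

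\textbf{The $F$-term (main obstacle).} We write $u_{xt}=v_x-au_x$ and use $F(\Theta)^2\le 2C_F^2(1+\Theta)^{2\alpha}$. Since $\Theta=0$ on $\pO$, the one-dimensional Sobolev bound gives $\|\Theta\|_{L^\infty}\le c_\Om\|\Theta_x\|_{L^4}$. Then $(1+\Theta)^{2\alpha}\le c(1+\|\Theta_x\|_{L^4}^{2\alpha})$, so the term is controlled by
$$c\,\ds\bigl(1+\|\Theta_x\|_{L^4}^{2\alpha}\bigr)\io(v_x^2+a^2u_x^2).$$
The pure part $c\ds\io(v_x^2+u_x^2)$ is bounded by $\io v_x^4$, $\io u_x^4$ and a constant, and these are absorbed as before. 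For the mixed part $\ds\|\Theta_x\|_{L^4}^{2\alpha}\io v_x^2$ we use three-factor Young's inequality. Since $\io v_x^2\le|\Om|^{1/2}\|v_x\|_{L^4}^2$, the product is $\|\Theta_x\|_4^{2\alpha}\|v_x\|_4^2$, with exponents $\frac{2\alpha}{4}+\frac{2}{4}+\frac{1-\alpha}{2}=1$. We assign weights so that the result is $\dse\io v_x^4+\delta_\star^{(1-\beta)/\alpha}\io\Theta_x^4+K_1$: the $\Theta$-power comes out as $\delta_\star^{(1/2-\beta\cdot\frac{1}{2})\cdot 2/\alpha}$. The same is done for the $u_x$ part, which goes into $\dse\io u_x^4$ (absorbed via $\rho$) or into $\dse\io v_x^4$.

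Checking this exponent bookkeeping is the delicate step. What matters is that the power on $\io v_x^4$ is at least $\beta$ and the remainder is only an additive constant. This works precisely because $\alpha<1$ leaves a positive exponent for the constant term, and it is the reason for the choice $\beta=\frac{1-\alpha}2$.

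Finally, all the $\delta_\star<1$ powers are compared monotonically, and the constants are collected into $K_1$ and $k_1=\min\{D,a\rho,\ugamma,a^3\}$, up to fixed factors.
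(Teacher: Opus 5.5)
Your proposal is correct. For \eqref{7.3} it is the paper's argument, including the final split $\frac{2\delta_\star^2}{\ugamma}v_x^2\Theta_x^2\le\frac1{\ugamma}(\delta_\star^{1/2}v_x^4+\delta_\star^{7/2}\Theta_x^4)$. The overall structure for \eqref{8.1} is also the paper's: $2\ds$ times Lemma \ref{lem5} plus $4\dse\rho$ times \eqref{4.1b}, then $\ds\le\dse$, then absorption of the $u_x^4$ terms via $\rho$.

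The real difference is in how you handle the $F$-term.
\begin{itemize}
\item The paper splits it pointwise, $2\ds F(\Theta)^2v_x^2\le\dse v_x^4+\delta_\star^{1-\beta}F(\Theta)^4$. It then uses sublinearity through the scalar Young inequality $\delta_\star^{1-\beta}(1+\Theta)^{4\alpha}\le\delta_\star^{\frac{1-\beta}\alpha}(1+\Theta)^4+1$, with exponents $\frac1\alpha$ and $\frac1{1-\alpha}$. After that it only needs the Poincar\'e inequality $\io\Theta^4\le C_p\io\Theta_x^4$ on $W^{1,4}_0(\Om)$.
\item You instead pull $F(\Theta)^2$ out in $L^\infty$ via the one-dimensional embedding $\|\Theta\|_{L^\infty}\le c_\Om\|\Theta_x\|_{L^4}$. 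You then apply a three-factor Young inequality, with exponents $\frac2\alpha,2,\frac2{1-\alpha}$, to $\ds\|\Theta_x\|_{L^4}^{2\alpha}\|v_x\|_{L^4}^2$, using the split $\ds=\delta_\star^{\frac{1-\beta}2}\cdot\delta_\star^{\frac\beta2}\cdot 1$. This gives exactly the same powers $\dse$ and $\delta_\star^{\frac{1-\beta}\alpha}$, plus a constant.
\end{itemize}
The paper's route is purely pointwise and does not need the $L^\infty$ embedding. Yours makes the $\delta_\star$-bookkeeping fully transparent. In both, $\alpha<1$ is exactly what makes the exponent of the constant term finite.

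Your closing remark overstates the role of $\beta$. In this lemma any $\beta\in(0,\frac12]$ works. The specific choice $\beta=\frac{1-\alpha}2$ only matters in the subsequent lemma yielding \eqref{k2}, where $\kappa>0$ requires $1-\beta-\alpha>0$.

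There are a few minor slips:
\begin{itemize}
\item To produce $\frac{d}{dt}\,4a^2\io u_x^2$ you must multiply the first inequality of Lemma \ref{lem3} by $8a^2$, not $4a^2$. With $8a^2$ you get $4a\io v_x^2$ and $4a^3\io u_x^2$, as in the paper; with $4a^2$ you would get $2a$.
\item $\delta_\star^{5/2}\io\Theta_x^4$ is weaker than $\delta_\star^{7/2}\io\Theta_x^4$, not better. Only your rebalanced split gives \eqref{7.3}.
\end{itemize}

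Your caveat about adjusting $\rho$ is well founded. With the literal $\rho=\frac{K_0\ogamma^2+1}a$, the term $2a\rho\dse\io u_x^4$ exactly cancels the $2\dse(K_0\ogamma^2+1)\io u_x^4$ in the paper's estimate. A somewhat larger $\rho$ is therefore needed to keep $\dse k_1\io u_x^4$ on the left.
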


\proof
In view of  Lemma $\ref{lem3}$ and Lemma $\ref{lem5}$,  we obtain the following by multiplying with $\ds$ and applying Young's inequality once again
\bea{5.1}
	 \frac{d}{dt} \ds \io \Theta_x^2
	+ \ds  \frac D2 \io \Theta_{xx}^2
	&\le& \ds 2K_0\ogamma^2 \io   v_x^4+ \ds 2K_0 \ogamma^2 \io  u_x^4 + \ds 2\io F(\Theta)^2  \Big(v_x^2+a^2u_x^2\Big)\nn 
\\
&\le&  2\big(\ds K_0\ogamma^2+\dse\big) \Big\{\io   v_x^4+ \io  u_x^4 \Big\}+\delta_\star^{1-\beta} C_F^4  \io (\Theta+1)^{4\alpha} \nn\\
&\le&  2\dse\big( K_0\ogamma^2+1\big) \Big\{ \io   v_x^4+ \io  u_x^4\Big\}+ \delta_\star^{\frac{1-\beta}{\alpha}} C_F^4\io (\Theta+1)^4+C_F^4|\Om|\nn
\eea
for all $t\in(0,\tm)$. With an application of a Poincaré inequality, which states that we are able to find a constant $C_p(\Om)>0$ such that
$$\io \vp ^4 \le C_p \io \vp_x^4 \qquad \mbox{for all }\vp\in W^{1,4}_0(\Om),$$
on $\vp(x)=\Theta(x,t)$ for every $t>0$, we estimate
$$\delta_\star^{\frac{1-\beta}\alpha}C_F^4 \io (\Theta+1)^4\le
\delta_\star^{\frac{1-\beta}\alpha}8C_F^4 \io (\Theta^4+1)\le 
\delta_\star^{\frac{1-\beta}\alpha}8C_F^4 C_p \io \Theta_x^4 +8C_F^4|\Om|\quad \mbox{ for all } t\in(0,\tm),$$
where we used $\ds<1$.
 From Lemma 3.1, we conclude not only
 \be{4.2}
  \frac{d}{dt} \dse \rho\io u_x^{4}
  + \dse 2a\rho \io u_x^{4}
  \le \dse \rho\frac{32}{a^3} \io v_x^{4}
  \qquad \mbox{for all } t\in (0,\tm),
  \ee
but also, taking \eqref{12.3} into account,
\bea{4.1}
   \frac{d}{dt}\Big\{ 4a^2 \io u_x^2
  +\io  v_x^2\Big\}&+&4a^3\io u_x^2  + \io \gamma(\Theta) v_{xx}^2\nn\\
  &\le&\io \frac{\gamma'(\Theta)^2}{\gamma(\Theta)} v_x^2\Theta_x^2 +8a\io v_x^2 + \frac { a^3}2 \io u_x^2 +\io \frac{f'(\Theta)^2}{\gamma(\Theta)} \Theta_x^2\nn\\
&\le& \frac{\delta_\star^{\frac 1 2}}{2\ugamma}\io v_x^4 +\frac{\delta_\star^{\frac 7 2}}{2\ugamma} \io \Theta_x^4 +8a\io v_x^2+\frac{a^3}2\io u_x^2 + \frac{\delta_\star^2}{\ugamma} \io \Theta_x^2 \nn
    \eea
for all $t\in(0,\tm)$.\\
For $k_1:=\min\{\frac D2, \  \ugamma, \ 2a\rho, \ 4a^3\}$ and $K_1:=\max\Big\{2(K_0\ogamma^2+1)+\rho\frac{32}{a^3}, \ \frac 1 {\ugamma}, \ 8C_F^4C_p, \ 9C_F^4|\Om|)\Big\}$ the claim follows.
\qed

The fourth-order gradients that appear on the right-hand side of \eqref{7.3} and \eqref{8.1} pose a challenge that forces us to retreat to proving solutions with arbitrary but finite existence times, rather than attempting to prove global existence. To control these terms, we estimate them against the naturally occurring negatively signed diffusion terms on the one hand and the cubic terms on the other. In the latter case, we can control the coefficients that occur using the sublinear temperature dependencies. Here, we do this for general $\delta\in(0,1)$.

\begin{lem}
 Assume $a,D$ and $F$ to satisfy \eqref{Vorr} for some positive constants $C_F$ and $\alpha\in(0,1)$. For any $\delta\in(0,1)$, $\eps>0$ and $P>0$ there exists $Q_\eps=Q_\eps(\Om,P,a,\eps)>0$ such that
\begin{align}\label{k2}
\delta^{\frac 72}P\io \Theta_x^4&+ \delta^{\frac{1-\beta}\alpha}P\io \Theta_x^4+ \delta^{\beta}P\io v_x^4+\delta^{\frac 12}P\io v_x^4 \nn\\  
&\le \delta^{\frac 12}\frac\eps2\io \Theta_{xx}^2 +\delta^{\frac 12} a \io \Theta_x^2 +\frac\eps2\io v_{xx}^2 + a  \io v_x^2+ \delta^{\kappa} Q_\eps\Big(\delta^{\frac 12}\io \Theta_x^2+\io v_x^2\Big)^3
\end{align}

for all $t\in(0,\tm)$, where $\beta:=\frac{1-\alpha}2$ and $\kappa\equiv\kappa(\alpha):=\min\big\{\beta,\frac 2\alpha(1-\beta-\alpha)\big\}$. 
\end{lem}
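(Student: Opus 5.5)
The plan is to estimate $\io \vp_x^4$ for $\vp\in\{\Theta(\cdot,t),v(\cdot,t)\}$ by a one-dimensional Gagliardo--Nirenberg inequality. Then the weighted powers of $\delta$ are distributed between a diffusion term and a cubic remainder by Young's inequality. The basic interpolation I would use is
\[
\|\vp_x\|_{L^4(\Om)}^4\le C_{GN}\|\vp_{xx}\|_{L^2(\Om)}\|\vp_x\|_{L^2(\Om)}^3 + C_{GN}\|\vp_x\|_{L^2(\Om)}^4 .
\]
It holds for $\vp_x$ with $\vp_x=0$ on $\pO$ (the case $v$), and for general $\vp_x\in W^{1,2}(\Om)$ including the lower-order term (the case $\Theta$). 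One can either keep the lower-order term throughout, or for $\Theta$ use the version with the $L^2$ term. Since $\delta<1$, the four terms on the left of \eqref{k2} reduce to two. With $\beta\le\frac12$ and $\frac72\ge\frac12$, we have $\delta^{1/2}\le\delta^{\beta}$ and $\delta^{7/2}\le\delta^{(1-\beta)/\alpha}$ whenever $(1-\beta)/\alpha\le 7/2$. To avoid relying on that, I would instead bound each term separately by $\delta^{e}P\io\vp_x^4$ with $e\in\{\frac72,\frac{1-\beta}\alpha\}$ for $\Theta$ and $e\in\{\beta,\frac12\}$ for $v$, and collect the constants at the end.

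For the $v$-terms, Young's inequality with exponents $2$ and $2$ gives
\[
\delta^e P C_{GN}\|v_{xx}\|_2\|v_x\|_2^3\le \tfrac\eps4\io v_{xx}^2+C\eps^{-1}P^2\delta^{2e}\|v_x\|_2^6 .
\]
Since $2e\ge 2\beta\ge\beta\ge\kappa$, the last term is bounded by $\delta^\kappa Q_\eps\big(\delta^{1/2}\io\Theta_x^2+\io v_x^2\big)^3$. The lower-order part $\delta^eP C_{GN}\|v_x\|_2^4$ must be split between $a\io v_x^2$ and the cubic term. By Young, $\|v_x\|_2^4\le \eta\|v_x\|_2^2+\eta^{-1}\|v_x\|_2^6$. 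With $\eta=a/(2\delta^{e}PC_{GN})$ we get
\[
\delta^ePC_{GN}\|v_x\|_2^4\le \tfrac a2\|v_x\|_2^2+C\delta^{2e}\|v_x\|_2^6 ,
\]
and again $2e\ge\kappa$.

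For the $\Theta$-terms the cubic quantity only controls $\delta^{1/2}\|\Theta_x\|_2^2$, so I write $\|\Theta_x\|_2^6=\delta^{-3/2}\big(\delta^{1/2}\|\Theta_x\|_2^2\big)^3$. The diffusion is likewise weighted, $\delta^{1/2}\frac\eps2\io\Theta_{xx}^2$. Young gives
\[
\delta^eP C\|\Theta_{xx}\|_2\|\Theta_x\|_2^3\le \tfrac\eps4\delta^{1/2}\io\Theta_{xx}^2+C\eps^{-1}\delta^{2e-1/2}\|\Theta_x\|_2^6 ,
\]
so the remainder is $\lesssim \delta^{2e-2}\big(\delta^{1/2}\|\Theta_x\|_2^2\big)^3$. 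The lower-order part becomes $\le\frac a2\delta^{1/2}\|\Theta_x\|_2^2+C\delta^{2e-1/2-3/2}(\cdot)^3$, which gives the same exponent $2e-2$. For $e=\frac72$ this exponent is $5\ge\kappa$. For $e=\frac{1-\beta}\alpha$ it equals
\[
\frac{2(1-\beta)}\alpha-2=\frac2\alpha(1-\beta-\alpha),
\]
which is exactly the second entry of $\kappa$ and is positive since $1-\beta-\alpha=\frac{1-\alpha}2>0$. This matches the definition of $\kappa$ and confirms the plan. Summing all contributions then yields \eqref{k2} with $Q_\eps$ depending on $P,\eps,a,\Om$.

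The main obstacle is the bookkeeping of the $\delta$-exponents in the $\Theta$-terms, where dividing by the weight $\delta^{1/2}$ costs powers of $\delta$. Only the sublinear structure encoded in $\frac{1-\beta}\alpha$ makes the net exponent positive. A secondary concern is the boundary behaviour in the Gagliardo--Nirenberg inequality for $\Theta_x$, which is not zero on $\pO$. This is handled by the lower-order term, whose absorption produces the terms $\delta^{1/2}a\io\Theta_x^2$ and $a\io v_x^2$.
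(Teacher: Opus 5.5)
Your proof is correct, and its skeleton is the paper's. You use the same one-dimensional Gagliardo--Nirenberg inequality $\|w\|_{L^4(\Om)}^4\le C_{GN}\|w_x\|_{L^2(\Om)}\|w\|_{L^2(\Om)}^3+C_{GN}\|w\|_{L^2(\Om)}^4$ for $w\in\{\Theta_x,v_x\}$. You also use the same Young splitting of the lower-order part into $a\io v_x^2$ resp.\ $\delta^{\frac12}a\io\Theta_x^2$ plus a cube, and the same rewriting $(\io\Theta_x^2)^3=\delta^{-\frac32}(\delta^{\frac12}\io\Theta_x^2)^3$.

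The one place where you genuinely differ is the absorption of $\io\Theta_{xx}^2$, and there your version is the better one. The paper merges the two $\Theta$-terms via $\delta^{\frac72}+\delta^{\frac{1-\beta}\alpha}\le 2\delta^m$ with $m=\max\{\frac72,\frac{1-\beta}\alpha\}$. That inequality is reversed for $\delta<1$. The paper then multiplies the unweighted estimate containing $\frac\eps2\io\Theta_{xx}^2$ by $\delta^m$. With the correct choice $m=\min\{\frac72,\frac{1-\beta}\alpha\}$, this route would leave the remainder $\delta^{m-\frac32}(\delta^{\frac12}\io\Theta_x^2)^3$. For $\alpha\ge\frac16$ one has $m-\frac32=\frac{1-2\alpha}{2\alpha}$, which is negative for $\alpha>\frac12$ and smaller than $\kappa=\frac{1-\alpha}2$ for $\alpha>\frac{3-\sqrt5}2$.

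You instead treat each $\Theta$-term separately and put the weight $\delta^{\frac12}$ into Young's inequality. This gives the remainder $\delta^{2e-2}(\delta^{\frac12}\io\Theta_x^2)^3$, whose exponent for $e=\frac{1-\beta}\alpha$ is exactly the $\frac2\alpha(1-\beta-\alpha)$ built into $\kappa$. So your estimate closes for every $\alpha\in(0,1)$, with $Q_\eps$ independent of $\delta$.
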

\proof

The application of a Gagliardo-Nirenberg inequality enables the determination of $C_{GN}>0$ such that
\bea{GNU}
\big \| \vp\big\|_{L^4(\Om)}^4 
&\le& C_{GN}\big\| \vp_{x}\big\|_{L^2(\Om)}\big\|\vp\big\|_{L^2(\Om)}^3+C_{GN}\big\|\vp\big\|_{L^2(\Om)}^4\qquad \mbox{for all }\vp\in C^2(\Om).\nn
\eea
By setting $\vp=\Theta_x$ in the aforementioned inequality, a combination with Young's inequality yields  for $\hat P= 2P\cdot C_{GN} $
\bea{neu2}
2P \io \Theta_x^4 &=& 2P \| \Theta_x\|_{L^4(\Om)}^4 \nn\\
&\le& \hat P\| \Theta_{xx}\|_{L^2(\Om)}\|\Theta_x\|_{L^2(\Om)}^3+\hat P\|\Theta_x\|_{L^2(\Om)}^4\nn\\
&=& \hat P\Big(\io \Theta_{xx}^2\Big)^{\frac 12} \cdot \Big(\io \Theta_x^2\Big)^{\frac 32}+\hat P\Big(\io \Theta_x^2 \Big)^2\nn\\
&\le& \frac{\eps}2\io \Theta_{xx}^2+ \frac{\hat P^2}{2\eps} \Big(\io\Theta_x^2\Big)^3 +\hat P\Big(\io \Theta_x^2\Big)^2 \qquad\mbox{for all } t\in(0,\tm).
\eea
Utilizing Young's inequality once again, we note
\bea{neu5}
\delta^{\frac 72} \hat P\Big(\io \Theta_x^2\Big)^2 &=& \Big\{\delta^{\frac 12}\io \Theta_x^2\Big\}\cdot \Big\{ \sqrt{a}\cdot \delta^{3}\frac{\hat P}{\sqrt{a}}  \cdot \Big(\io \Theta_x^2\Big) \Big\}\nn\\
&\le& \Big\{\delta^{\frac 12}  \io \Theta_x^2\Big\}\cdot  \Big\{a+\delta^{6}\frac{\hat P^2}a \Big(\io \Theta_x^2\Big)^2\Big\}\nn\\
&\le& \delta^{\frac 12}  a\io \Theta_x^2+ \delta^{5} \frac{\hat P^2}a\Big(\delta^{\frac 12}\io \Theta_x^2\Big)^3 \qquad\mbox{ for all }t\in(0,\tm),\nn
\eea
and similarly
\bea{neu5}
\delta^{\frac{1-\beta}\alpha} \hat P\left(\io \Theta_x^2\right)^2 &=& \left\{\delta^{\frac12}\io \Theta_x^2\right\}\cdot \left\{ \sqrt{a}\cdot \delta^{\frac {1-\beta}\alpha-\frac 12}\frac{\hat P}{\sqrt{a}}  \cdot \left(\io \Theta_x^2\right) \right\}\nn\\
&\le& \left\{\delta^{\frac12}  \io \Theta_x^2\right\}\cdot  \left\{a+ \delta^{\frac {2-2\beta-\alpha}\alpha}\frac{\hat P^2}a \left(\io \Theta_x^2\right)^2\right\}\nn\\
&\le& \delta^{\frac12}  a\io \Theta_x^2+  \delta^{\frac2\alpha(1-\beta-\alpha)}\frac{\hat P^2}a\left(\delta^{\frac12}\io \Theta_x^2\right)^3 \quad\mbox{ for all }t\in(0,\tm),\nn
\eea
where in view of the definition of $\beta:=\frac {1-\alpha}2 $, the positivity of

\bea{pos}
 1-\beta-\alpha&=& 1-\frac 12+\frac\alpha 2-\alpha\nn\\
 &=& \frac12(1-\alpha)\nn
 \eea
\color{black}
is guaranteed due to $\alpha\in(0,1)$. Now, we multiply \eqref{neu2} by $\delta^m$, with $m:=\max\big\{\frac 72,\frac{1-\beta}\alpha\big\}>\frac 12$ and since $0<\delta<1$ we infer

\bea{neu21}
 \delta^{\frac72}P\io \Theta_x^4 +\delta^{\frac{1-\beta}\alpha} P \io \Theta_x^4 &\le& \delta^m 2P\io\Theta_x^4 
\nn\\
&\le& \delta^{\frac 12}\frac{\eps}2\io \Theta_{xx}^2 +\delta^{\frac 12} a \io \Theta_x^2+ \delta^{\kappa} \left(\frac{\hat P^2}a+\frac{\hat P^2}{2\eps}\right)\left(\delta^{\frac 12}\io \Theta_x^2\right)^3 \nn
\eea

for all $t\in(0,\tm)$. In similar fashion we are able to estimate
\bea{neu3}
2P\io v_x^4 \le \frac{\eps} 2 \io v_{xx}^2+\frac{\hat P^2}{2\eps}\left(\io v_x^2\right)^3+\hat P\left(\io v_x^2\right)^2\qquad\mbox{for all } t\in(0,\tm),
\nn\eea

as well as

\bea{neu6}
\hat P\left(\io v_x^2\right)^2 
\le a \io v_x^2+ \frac{\hat P^2}{a}\left(\io v_x^2\right)^3\qquad\mbox{ for all }t\in(0,\tm),\nn
\eea
and since $\delta^{\frac 12}<\delta^\beta$ applies, we conclude from comining
\bea{neu32}
\delta^{\frac12}P\io v_x^4 +\delta^{\beta}P\io v_x^4 \le 2P\delta^{\kappa}\io v_x^4 \le \frac{\eps} 2 \io v_{xx}^2+a\io v_x^2+\delta^{\kappa}\left(\frac{\hat P}{2\eps}+\frac{\hat P^2}a \
\right)\left(\io v_x^2\right)^3 \nn
\eea
for all $t\in(0,\tm).$
Setting $Q_\eps:=\frac{\hat P^2}{2\eps}+\frac{\hat P^2}a$, we may readily infer \eqref{k2}.\\

\qed

With our preparations complete, we are now in a position to establish the final comparative argument. The temporal development of the relevant energy functional can be traced by using the preceding lemmas, the sublinear temperature dependencies as formulated in \eqref{12.3}, by means of a cubic ODE, which explodes after a finite time, but only after any prescribed time $T_\star$.

\begin{lem}\label{lem9}
Suppose $a,D,$ and $F$ are such that \eqref{Vorr} is satisfied for some positive constants $C_F$ and $\alpha\in(0,1)$ and that $(u_0, u_{0t},\Theta_0)$ are such that \eqref{Init} holds.
  Then for every $M$, $\ugamma$, $\ogamma$, and $T_\star>0$, there exists a constant $\delta_\star=\delta_\star(M,T_\star,a,D,\Om,\ugamma,\ogamma,C_F,\alpha)>0$ such that for all functions $u_0, u_{0t}, \Theta_0, \gamma,\Gamma$ and $f$ which satisfy $\eqref{Vorr2}$, $\eqref{gamBesch}$ and $\eqref{12.3}$, it follows that in Proposition \ref{theo_loc} actually $\tm\ge T_\star$ holds.

\end{lem}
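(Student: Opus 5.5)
We follow the ODE comparison strategy announced in the introduction. For $\delta_\star\in(0,1)$ still to be fixed, set
\[
y(t):=1+\ds\io\Theta_x^2+\dse\rho\io u_x^4+\io v_x^2+4a^2\io u_x^2,\qquad t\in(0,\tm).
\]
Adding \eqref{7.3} and \eqref{8.1} from Lemma \ref{lem8} gives an inequality whose right-hand side contains the quartic terms $\delta_\star^{7/2}K_1\io\Theta_x^4$, $\delta_\star^{\frac{1-\beta}\alpha}K_1\io\Theta_x^4$, $\dse K_1\io v_x^4$ and $\ds K_1\io v_x^4$. The remaining terms are $8a\io v_x^2$, $\delta_\star^2K_1\io\Theta_x^2$ and $K_1$.

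We apply the preceding lemma with $\delta:=\delta_\star$, $P:=K_1$ and $\eps:=k_1$. This bounds the quartic terms by four pieces:
\begin{itemize}
\item $\ds\frac{k_1}2\io\Theta_{xx}^2$ and $\frac{k_1}2\io v_{xx}^2$, which are absorbed by the dissipative terms $\ds k_1\io\Theta_{xx}^2$ and $k_1\io v_{xx}^2$ on the left;
\item $\ds a\io\Theta_x^2+a\io v_x^2$, which is at most $a\,y$;
\item $\delta_\star^\kappa Q\,y^3$, where $Q=Q_{k_1}$ depends only on $\Om,a,D,\ugamma,\ogamma,C_F,\alpha$ and not on $\delta_\star$;
\item the leftovers: since $\delta_\star<1$ and $\frac12\le 2$, we have $\delta_\star^2K_1\io\Theta_x^2\le K_1\ds\io\Theta_x^2\le K_1y$, and also $8a\io v_x^2\le 8a\,y$ and $K_1\le K_1y$.
\end{itemize}
Dropping the nonnegative terms remaining on the left yields
\[
y'(t)\le \delta_\star^\kappa Q\,y^3+C\,y\qquad\text{for all }t\in(0,\tm),
\]
with $C:=9a+2K_1$ independent of $\delta_\star$.

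For the initial value, $v_{0x}=u_{0tx}+au_{0x}$, so by \eqref{12.2} and $\delta_\star<1$ we get $y(0)\le 1+M+\rho M+2(1+a^2)M+4a^2M=:Y_0$. The bound $Y_0$ depends only on $M,a,D,\ogamma$.

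Next comes the comparison step. Put $z:=e^{-Ct}y$; then $z'\le\delta_\star^\kappa Q e^{2Ct}z^3\le \delta_\star^\kappa Qe^{2CT_\star}z^3$ on $(0,\min\{\tm,T_\star\})$. Integrating $(z^{-2})'\ge -2\delta_\star^\kappa Qe^{2CT_\star}$ gives
\[
z(t)^{-2}\ge Y_0^{-2}-2\delta_\star^\kappa Qe^{2CT_\star}T_\star .
\]
We now choose $\delta_\star\in(0,1)$ so small that $2\delta_\star^\kappa Qe^{2CT_\star}T_\star\le\frac12Y_0^{-2}$. This choice depends only on the announced parameters, and yields $y(t)\le\sqrt2\,Y_0e^{CT_\star}$ on $(0,\min\{\tm,T_\star\})$.

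Finally, suppose $\tm<T_\star$. The uniform bound on $y$ controls $\|v_x\|_{L^2}$ and $\|u_x\|_{L^2}$, hence $\|u_{tx}\|_{L^2}$. It also controls $\|\Theta_x\|_{L^2}$ with the harmless factor $\delta_\star^{-1/2}$. Because $\Theta$ vanishes on $\pO$, the one-dimensional embedding $W^{1,2}_0(\Om)\hookrightarrow L^\infty(\Om)$ bounds $\|\Theta\|_{L^\infty}$. For the $L^2$ part of $\|u_t\|_{W^{1,2}}$, we add an elementary $L^2$ estimate for $u_t$: test the first equation of \eqref{0v} with $v$ and use \eqref{gamBesch} and \eqref{12.3}. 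Alternatively, note that the extensibility criterion \eqref{ext} really concerns gradient quantities up to a Poincaré/mean-value argument, and the mean of $u_t$ evolves by an explicit linear ODE. Either way we contradict \eqref{ext}, so $\tm\ge T_\star$.

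The main obstacle is bookkeeping rather than a new idea: $Q$ and $C$ must be independent of $\delta_\star$, and every leftover term must be dominated by $y$ with $\delta_\star$-independent constants. That is why the weights $\ds$ and $\dse$ are built into $y$; these weights are also what make the $\delta_\star^{-1/2}$ loss in the final step harmless.
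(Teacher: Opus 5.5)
Your proof is correct. Its architecture is the same as the paper's: the same functional $y$, the two inequalities of Lemma \ref{lem8} added together, Lemma 3.4 applied with $\eps=k_1$ and $P=K_1$, the resulting Bernoulli-type inequality $y'\le\delta_\star^\kappa Q y^3+Cy$ with $\delta_\star$-independent constants, and a contradiction with \eqref{ext}.

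Two sub-steps are carried out differently.

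\emph{Comparison step.} The paper writes down the exact solution $\hat y(t)=\sqrt\tau e^{\tau(s_0+t)}/\sqrt{2-\sigma e^{2\tau(s_0+t)}}$ of $\hat y'=\tau\hat y+\sigma\hat y^3$. It uses the time shift $s_0$ to force $\hat y(0)\ge y(0)$ and then appeals to an ODE comparison principle. You instead integrate $(z^{-2})'$ for $z=e^{-Ct}y$. This is self-contained and gives the same kind of smallness condition on $\delta_\star^\kappa$. Your bound $Y_0$ on $y(0)$ is also more careful than the paper's $\chi M+1$, because it accounts for $v_{0x}=u_{0tx}+au_{0x}$ and for all summands.

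\emph{Bound on $\|\Theta\|_{L^\infty(\Om)}$.} The paper uses the Duhamel representation and $L^1$--$L^\infty$ smoothing of the Dirichlet heat semigroup, estimating $\Gamma(\Theta)u_{xt}^2$ and $F(\Theta)u_{xt}$ in $L^1$. You observe that $y$ already controls $\delta_\star^{1/2}\io\Theta_x^2$. The Dirichlet condition then gives $\|\Theta\|_{L^\infty(\Om)}\le|\Om|^{1/2}\|\Theta_x\|_{L^2(\Om)}$ directly. This is much shorter and avoids semigroup estimates altogether.

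For the zeroth-order part of $\|u_t\|_{W^{1,2}(\Om)}$, your second alternative is exactly the paper's argument. The mean is in fact constant, since $\frac{d}{dt}\io(v-au)=\io(\gamma(\Theta)v_x)_x+\io(f(\Theta))_x=0$ because $v_x=0$ and $\Theta=0$ on $\pO$.

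One small correction to your closing remark: the factor $\delta_\star^{-1/2}$ is harmless simply because the contradiction only needs finiteness on $(0,\tm)$ for the already fixed $\delta_\star$. The weights in $y$ are not what makes it harmless.
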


\proof

By linear superposition, Lemma 3.3 leads to 
 \begin{align*}
	\frac{d}{dt} &\bigg\{ \delta_\star^{\frac 12}\io \Theta_x^2 + \delta_\star^{\beta}\rho \io u_x^{4} +\io v_x^2 + 4a^2 \io u_x^2 \bigg\}
	+ \delta_\star^{\frac 12} k_1 \io \Theta_{xx}^2
	+ \delta_\star^{\beta} k_1 \io u_x^{4}+k_1\io v_{xx}^2+k_1\io u_x^2 \\
	&\le \delta_\star^{\beta}K_1\io v_x^4+\delta_\star^{\frac{1-\beta}\alpha} K_1\io \Theta_x^4+ \delta_\star^\frac{7}{2}K_1\io \Theta_x^4
	+ \delta_\star^\frac{1}{2} K_1\io v_x^4+ 8a \io v_x^2 +\delta_\star^{\frac 12} K_1 \io \Theta_x^2 +K_1 \nn
  \end{align*}
for all $t\in(0,\tm)$.

Applying Lemma 3.4 to $\eps=k_1(a,D,\ugamma)$ and $P=K_1(\Om,a,D,\ugamma,\ogamma,C_F,\alpha)$, we are able to estimate for $K_2=K_2(\Om,a,D,\ugamma,\ogamma,C_F,\al):=Q_\eps(\Om,a,D,\ugamma,\ogamma,C_F,\al)>0$
\begin{align}\label{c21}	
	\frac{d}{dt} \bigg\{ \delta_\star^{\frac 12}\io \Theta_x^2 + \delta_\star^{\beta}\rho \io u_x^{4} &+\io v_x^2 + 4a^2 \io u_x^2 \bigg\}
	+ \delta_\star^{\frac 12} \frac{k_1}2 \io \Theta_{xx}^2
	+ \delta_\star^{\frac 12} k_1 \io u_x^{4} +\frac{k_1}2\io v_{xx}^2+k_1\io u_x^2 \nn\\
	&\le  \delta_\star^{\kappa} K_2\Big(\delta_\star^{\frac 12}\io \Theta_x^2+\io v_x^2\Big)^3+ 9a \io v_x^2 +\delta_\star^{\frac 12} (a+K_1) \io \Theta_x^2 +K_1
  \end{align}
for all $t\in(0,\tm)$.

We now fix the positive constants 
\bea{c20}
\hs{-15mm}& & \chi:=\max\big\{1,4a^2,\rho\big\},\quad  \tau:=\max\big\{9a,a+K_1\big\},\quad s_0:=\max\left\{0,\frac 1 \tau\ln\left(\frac {\chi M+1} {\sqrt\tau}\right)\right\}\nn\\
 \mbox{and }\hs{-12mm} & & \nn\\
 \hs{-15mm} & &\delta_\star=\delta_\star(\Om,a,D,\ugamma,\ogamma,M,T_\star,C_F,\alpha):=\min\left\{1,\left(\frac{e^{-2\tau(s_0+T_\star)}}{K_2}\right)^{\frac 1\kappa}
\right\},\qquad \qquad \nn\eea
and define
$$y(t):= 1+ \delta_\star^{\frac 12} \io \Theta_x^2+\delta_\star^{\beta} \rho \io u_x^4 +\io v_x^2 + 4a^2 \io u_x^2\qquad\mbox{ for every } t\in[0,\tm).$$
Due to \eqref{c21}, we are able to conclude
\bea{neu6} 
y'(t)\le \delta_\star^{\kappa} K_2 y(t)^3+\tau y(t) \qquad\mbox{ for all } t\in(0,\tm).
\eea
In preparation for a comparison argument, we define a function
 \bea{OL}
 \hat y(t):= \frac{\sqrt\tau e^{\tau(s_0+t)}}{\sqrt{2-\sigma e^{2\tau(s_0+t)}}}\qquad \mbox{ on } [0,T_\star],
 \eea
 where $\sigma:=\delta_\star^{\kappa}K_2$, which grows
  monotonously and is well-defined on $[0,T_\star]$ since our previously selected constants guarantee
$$ \sigma e^{2\tau(s_0+T_\star)}\le K_2\Bigg(\frac{e^{-2\tau(s_0+T_\star)}}{K_2}\Bigg) \cdot e^{2\tau(s_0+T_\star)}=1.$$
For the derivative we calculate
\bea{neu7}
\hat y'(t)&=& \tau  \frac{\sqrt\tau e^{\tau(s_0+t)}}{\sqrt{2-\sigma e^{2\tau(s_0+t)}}}+ \frac{\sqrt\tau e^{\tau(s_0+t)}}{2\Big(2-\sigma e^{2\tau(s_0+t)}\Big)^{\frac 3 2}}\cdot e^{2\tau(s_0+t)} \cdot2\tau\sigma\nn\\
&=& \tau \hat y(t)+\sigma \hat y(t)^3 \quad\qquad \mbox{ for all } t\in(0,T_\star) \nn
\eea
and for the initial condition we are able to denote that if $\frac{\chi M+1}{\sqrt{\tau}}\ge 1$
\bea{neu8}
\hat y(0)=\frac{\sqrt\tau e^{\tau s_0}}{\sqrt{2-\sigma e^{2\tau s_0}}}\ge \sqrt \tau e^{\ln(\frac {\chi M+1}{\sqrt \tau})} \ge \chi M+1 \nn
\eea
holds, and if $\frac{\chi M+1}{\sqrt\tau}<1$
\bea{neu8b}
\hat y(0)\ge \sqrt{\tau}>\chi M+1,\nn
\eea
so in total we obtain $\hat y(0)\ge \chi M+1 \ge y(0)$. We now proceed on the assumption that $\tm<T_\star$, so we would be able to conclude from a comparison argument that \bea{yle}
y(t)\le \hat y(t)\le \hat y(T_\star)=\frac{\sqrt\tau e^{\tau(s_0+T_\star)}}{\sqrt{2-\sigma e^{2\tau(s_0+T_\star)}}} < \sqrt\tau e^{\tau(s_0+T_\star)}  \mbox{ on }(0,\tm).
\eea 
We note that according to known smoothing properties of the Dirichlet heat semi group $(e^{t\Delta})_{t\ge0}$ on $\Om$ there exists $C_1=C_1(D)>0$  such that 
\bea{neu9}
\|e^{tD\Delta} \phi\|_{L^\infty(\Om)} &\le& C_1(1+t^{-\frac 12} )\|\phi\|_{L^1(\Om)}\qquad \mbox{ for all } t\in(0,T) \mbox{ and } \phi\in C^2(\bom).
\eea
For the Duhamel representation associated with the third equation in \eqref{0v}, and by the maximum principle and \eqref{neu9}, our assumption would lead to

\bea{neu10}
\|\Theta(\cdot,t)\|_{L^\infty(\Om)}
	&=& \bigg\| e^{tD\Del} \Theta_0 + \int_0^t e^{(t-s)D\Del} 
		\Big\{ \Gamma(\Theta(\cdot,s)) \big| v_x(\cdot,s)-a u_x(\cdot,s)\big|^2\Big\}ds\nn\\
        & &\qquad \qquad +\int_0^t e^{(t-s)D\Delta}\Big\{F(\Theta(\cdot,s)) (v_x(\cdot,s)-au_x(\cdot,s))\Big\} ds \bigg\|_{L^\infty(\Om)} \nn\\
	&\le& \big\| e^{tD\Del} \Theta_0\big\|_{L^\infty(\Om)}
	+ \int_0^t \Big\| e^{(t-s)D\Del} 
		\Big\{ \Gamma(\Theta(\cdot,s)) \big| v_x(\cdot,s)-a u_x(\cdot,s)\big|^2 ds \Big\|_{L^\infty(\Om)} \nn\\
         & &\qquad \qquad +\int_0^t\Big\| e^{(t-s)D\Delta}\Big\{F(\Theta(\cdot,s)) (v_x(\cdot,s)-au_x(\cdot,s))\Big\} ds \Big\|_{L^\infty(\Om)} \nn\\
	&\le& \big\|\Theta_0\big\|_{L^\infty(\Om)}
	+C_1 \int_0^t \big(1-(t-s)^{-\frac 12}\big)
		\Big\| \Gamma(\Theta(\cdot,s)) \big| v_x(\cdot,s)-a u_x(\cdot,s)\big|^2 \Big\|_{L^1(\Om)} ds \nn\\
        & &\qquad \qquad   +C_1\int_0^t (1-(t-s)^{-\frac 1 2}) \Big\| F(\Theta(\cdot,s)) (v_x(\cdot,s)+au_x(\cdot,s))\Big\|_{L^1(\Om)} ds
\eea
for every $t\in(0,\tm)$. Furthermore, we would be able to estimate
\bea{L1}
& &C_1\int_0^t \big(1-(t-s)^{-\frac 12}\big)
		\Big\| \Gamma(\Theta(\cdot,s)) \big| v_x(\cdot,s)-a u_x(\cdot,s)\big|^2 \Big\|_{L^1(\Om)} ds \nn\\
        &\le& 2C_1  \ogamma\int_0^t (1-(t-s)^{-\frac 12})\io\big(v_x(x,s)^2+a^2u_x(x,s)^2\big)dx\ ds\nn\\
        &\le& 8C_1  \ogamma  t y(t)\nn\\
        &\le& 8C_1 \ogamma T_\star \hat y(T_\star)<\infty \qquad\mbox{for all } t\in(0,\tm),
        \eea
as well as
\bea{L2}
& &  C_1\int_0^t (1-(t-s)^{-\frac 1 2}) \Big\| F(\Theta(\cdot,s)) (v_x(\cdot,s)+au_x(\cdot,s))\Big\|_{L^1(\Om)} ds\nn\\
&\le& C_1\int_0^t(1-(t-s)^{\frac12})\left(\io F(\Theta)^2 +\io v_x^2+a^2\io u_x^2\right) \ ds\nn\\
 &\le&  \delta_\star^{-\frac 12}\hat y(t) C_1(2C_pC_F^2+2C_F^2|\Om|+4) \nn\\
 &\le& \delta_\star^{-\frac 12}\hat y(T_\star)C_1(2C_pC_F^2+2C_F^2|\Om|+4)<\infty\qquad \mbox{ for all } t\in(0,\tm).
        \eea

By combining \eqref{L1} and \eqref{L2} with \eqref{neu10}, we would be able to conclude that
\bea{Grenze}
\|\Theta(\cdot,t)\|_{L^\infty(\Om} \le \|\Theta_0\|_{L^\infty(\Om)} + 2C_1 \ogamma (1+a^2)T_\star \hat y(T_\star)+\delta_\star^{-\frac 12}\hat y(T_\star)C_1(2C_pC_F^2+2C_F^2|\Om|+4)<\infty\nn
\eea

for all $t\in(0,\tm)$ and further
\bea{c30} 
& & \hs{-15mm}
	\limsup_{t\nearrow\tm} \Big\{  \|\Theta(\cdot,t)\|_{L^\infty(\Om)} \Big\}
		<\infty. \nn	
\eea
Furthermore we immediately obtain from \eqref{0v} 
\bas
	\frac{d}{dt} \io v - a\frac{d}{dt} \io u=0
	\qquad \mbox{for all } t\in (0,\tm),
  \eas
which directly yields

\be{mass}
	\io \big(v(\cdot,t)-au(\cdot,t)\big)=\io (v_0-au_0)
	\qquad \mbox{for all } t\in (0,\tm).\nn
  \ee
So 
\be{m1}
	\Big\|v(\cdot,t) -a u(\cdot,t)\Big\|_{W^{1,2}(\Om)} 
	\le C_2 \Big\| v_x(\cdot,t)-a  u_x(\cdot,t)\Big\|_{L^2(\Om)} 
	+ C_2 \cdot \bigg| \io (v_0-au_0)\bigg|\nn
  \ee
results for all $t\in(0,\tm)$ and some $C_2>0$ by means of a Poincaré-type inequality, which states that there exists $C_3>0$ such that
$$\left\| \vp - \frac{1}{|\Om|}\io \vp \right\|_{W^{1,2}(\Om)} \le C_3\| \vp_x\|_{L^2(\Om)}$$
for all $\vp\in W^{1,2}(\Om)$.  Now, recalling \eqref{yle}, we conclude that there would exist a constant $C_4>0$ such that
\be{ext2}
\|v(\cdot,t) -a u(\cdot,t)\|_{W^{1,2}(\Om)} \le  C_4 \hat y(t)\le C_4\sqrt\tau e^{\tau(s_0+T_\star)} \qquad\mbox{ for all } t\in(0,\tm)\nn,
\ee
which would imply 
\be{ext3} 
\limsup_{t\nearrow\tm} \Big\{  \|v(\cdot,t) -a u(\cdot,t)\|_{W^{1,2}(\Om)} \Big\}
		<\infty.	\ee
		The combination of the statements \eqref{c30} and \eqref{ext3} leads to a contradiction with the extensibility criterion \eqref{ext}.
		 \qed

\textbf{Data availability statement.}
Data sharing is not appicable to this article since no datasets were generated or analyzed during the current study.

\textbf{Acknowledgment.}
The author acknowledges the support provided by the Deutsche Forschungsgemeinschaft
(Project No. 444955436) and further declares that he has no conflict of interest.

\printbibliography

@Article{Blanchard2000,
  author     = {Blanchard, D. and Guib\'e, O.},
  journal    = {Adv. Differential Equations},
  title      = {Existence of a solution for a nonlinear system in thermoviscoelasticity},
  year       = {2000},
  issn       = {1079-9389},
  number     = {10-12},
  pages      = {1221--1252},
  volume     = {5},
  fjournal   = {Advances in Differential Equations},
  mrclass    = {74H20 (35Q72 74D10 74F05)},
  mrnumber   = {1785674},
  mrreviewer = {Song\ Mu\ Zheng},
doi = {10.57262/ade/1356651222}
}

@Article{Racke_1997,
  author    = {Racke, Reinhard and Zheng, Songmu},
  journal   = {Journal of Differential Equations},
  title     = {Global Existence and Asymptotic Behavior in Nonlinear Thermoviscoelasticity},
  year      = {1997},
  issn      = {0022-0396},
  month     = feb,
  number    = {1},
  pages     = {46–67},
  volume    = {134},
  doi       = {10.1006/jdeq.1996.3216},
  publisher = {Elsevier BV},
  url       = {http://dx.doi.org/10.1006/jdeq.1996.3216},
}

@Article{Dafermos_1982,
  author    = {Dafermos, C.M. and Hsiao, L.},
  journal   = {Nonlinear Analysis: Theory, Methods $\&$ Applications},
  title     = {Global smooth thermomechanical processes in one-dimensional nonlinear thermoviscoelasticity},
  year      = {1982},
  issn      = {0362-546X},
  month     = may,
  number    = {5},
  pages     = {435–454},
  volume    = {6},
  doi       = {10.1016/0362-546x(82)90058-x},
  publisher = {Elsevier BV},
  url       = {http://dx.doi.org/10.1016/0362-546x(82)90058-x},
}

@Article{Jiang_1993,
  author    = {Jiang, Song},
  journal   = {Quarterly of Applied Mathematics},
  title     = {Global large solutions to initial-boundary value problems in one-dimensional nonlinear thermoviscoelasticity},
  year      = {1993},
  issn      = {1552-4485},
  month     = dec,
  number    = {4},
  pages     = {731–744},
  volume    = {51},
  doi       = {10.1090/qam/1247437},
  publisher = {American Mathematical Society (AMS)},
  url       = {http://dx.doi.org/10.1090/qam/1247437},
}

@Article{Kawohl_1985,
  author    = {Kawohl, Bernhard},
  journal   = {Journal of Differential Equations},
  title     = {Global existence of large solutions to initial boundary value problems for a viscous, heat-conducting, one-dimensional real gas},
  year      = {1985},
  issn      = {0022-0396},
  month     = jun,
  number    = {1},
  pages     = {76–103},
  volume    = {58},
  doi       = {10.1016/0022-0396(85)90023-3},
  publisher = {Elsevier BV},
  url       = {http://dx.doi.org/10.1016/0022-0396(85)90023-3},
}

@Article{Slemrod_1981,
  author    = {Slemrod, M.},
  journal   = {Archive for Rational Mechanics and Analysis},
  title     = {Global existence, uniqueness, and asymptotic stability of classical smooth solutions in one-dimensional non-linear thermoelasticity},
  year      = {1981},
  issn      = {1432-0673},
  month     = jun,
  number    = {2},
  pages     = {97–133},
  volume    = {76},
  doi       = {10.1007/bf00251248},
  publisher = {Springer Science and Business Media LLC},
  url       = {http://dx.doi.org/10.1007/bf00251248},
}

@Article{Jiang_1990,
  author    = {Jiang, Song},
  journal   = {Proceedings of the Royal Society of Edinburgh: Section A Mathematics},
  title     = {Global existence of smooth solutions in one-dimensional nonlinear thermoelasticity},
  year      = {1990},
  issn      = {1473-7124},
  number    = {3–4},
  pages     = {257–274},
  volume    = {115},
  doi       = {10.1017/s0308210500020631},
  publisher = {Cambridge University Press (CUP)},
  url       = {http://dx.doi.org/10.1017/s0308210500020631},
}

@Book{GutierrezLemini2013,
  author    = {Gutierrez-Lemini, D.},
  publisher = {Springer US},
  title     = {Engineering Viscoelasticity},
  year      = {2013},
  isbn      = {9781461481393},
  lccn      = {2013942016},
  url       = {https://books.google.de/books?id=MbjBAAAAQBAJ},
  doi = {10.1007/978-1-4614-8139-3}
}

@Book{Chawla2008,
  author    = {Chawla, Krishan Kumar and Meyers, Marc André},
  publisher = {Cambridge University Press},
  title     = {Mechanical Behavior of Materials, Third Edition},
  year      = {2008},
  isbn      = {9781107386358},
  url       = {https://books.google.de/books?id=8UYwnQAACAAJ},
doi = {10.1017/CBO9780511810947}
}

@Book{Boley1960,
  author    = {Boley, Bruno A. and Weiner, Jerome H.},
  publisher = {Wiley},
  title     = {Theory of Thermal Stresses},
  year      = {1960},
  url       = {https://books.google.de/books?id=HW5vyQEACAAJ},
}

@Article{Claes2024,
  author   = {Claes, Leander and Lankeit, Johannes and Winkler, Michael},
  journal  = {Mathematical Models and Methods in Applied Sciences},
  title    = {A model for heat generation by acoustic waves in piezoelectric materials: Global large-data solutions},
  year     = {2025},
  number   = {11},
  pages    = {2465-2512},
  volume   = {35},
  doi      = {10.1142/S0218202525500447},
  eprint   = { https://doi.org/10.1142/S0218202525500447
},
  url      ={https://doi.org/10.1142/S0218202525500447},
}

@Article{Roubicek2009,
  author    = {Roubíček, Tomáš},
  journal   = {Quarterly of Applied Mathematics},
  title     = {Thermo-visco-elasticity at small strains with $L^1$-Data},
  year      = {2009},
  issn      = {0033569X, 15524485},
  number    = {1},
  pages     = {47--71},
  volume    = {67},
  publisher = {Brown University},
  url       = {http://www.jstor.org/stable/43638859},
  urldate   = {2025-02-27},
doi = {10.1090/S0033-569X-09-01094-3}
}

@Article{Shibata1995,
  author     = {Shibata, Yoshihiro},
  journal    = {Math. Methods Appl. Sci.},
  title      = {Global in time existence of small solutions of nonlinear thermoviscoelastic equations},
  year       = {1995},
  issn       = {0170-4214,1099-1476},
  number     = {11},
  pages      = {871--895},
  volume     = {18},
  doi        = {10.1002/mma.1670181104},
  fjournal   = {Mathematical Methods in the Applied Sciences},
  mrclass    = {35Q72 (35B40 73B30 73F15)},
  mrnumber   = {1346664},
  mrreviewer = {Song\ Jiang},
  url        = {https://doi.org/10.1002/mma.1670181104},
}

@Article{Bonetti2003,
  author     = {Bonetti, Elena and Bonfanti, Giovanna},
  journal    = {Electron. J. Differential Equations},
  title      = {Existence and uniqueness of the solution to a 3{D} thermoviscoelastic system},
  year       = {2003},
  issn       = {1072-6691},
  pages      = {No. 50, 15},
  fjournal   = {Electronic Journal of Differential Equations},
  mrclass    = {74H20 (35A15 35Q72 74D10 74F05 74H25)},
  mrnumber   = {1971116},
  mrreviewer = {Song\ Mu\ Zheng},
}

@Article{Rossi2013,
  author     = {Rossi, Riccarda and Roubíček, Tomáš},
  journal    = {Interfaces Free Bound.},
  title      = {Adhesive contact delaminating at mixed mode, its thermodynamics and analysis},
  year       = {2013},
  issn       = {1463-9963,1463-9971},
  number     = {1},
  pages      = {1--37},
  volume     = {15},
  doi        = {10.4171/IFB/293},
  fjournal   = {Interfaces and Free Boundaries. Mathematical Analysis, Computation and Applications},
  mrclass    = {74M15 (35Q74 74A15 74C05)},
  mrnumber   = {3062572},
  mrreviewer = {M.\ Fabrizio},
  url        = {10.4171/IFB/293},
}

@Article{Owczarek2023,
  author  = {Owczarek, Sebastian and Wielgos, Karolina},
  journal = {Mathematical Methods in the Applied Sciences},
  title   = {On a thermo‐visco‐elastic model with nonlinear damping forces and $L^1$ temperature data},
  year    = {2023},
  month   = {02},
  volume  = {46},
  doi     = {10.1002/mma.9098},
}

@Article{Cieslak2023,
  author  = {Cieślak, Tomasz and Muha, Boris and Trifunović, Srđan},
  journal = {Calculus of Variations and Partial Differential Equations},
  title   = {Global weak solutions in nonlinear 3D thermoelasticity},
  year    = {2023},
  month   = {12},
  volume  = {63},
  doi     = {10.1007/s00526-023-02615-2},
}

@Article{Fricke2025,
  author  = {Fricke, Torben J.},
  journal = {Acta Applicandae Mathematicae},
  title   = {Local and Global Solvability in a Viscous Wave Equation Involving General Temperature-Dependence},
  year    = {2025},
  issn    = {1572-9036},
  number  = {1},
  pages   = {1},
  volume  = {200},
  doi     = {10.1007/s10440-025-00751-9},
  refid   = {Fricke2025},
  url     = {https://doi.org/10.1007/s10440-025-00751-9},
}

@unpublished{Winkler,
  author        = {Michael Winkler},
 note = {Preprint},
  title         = {Large-data regular solutions in a one-dimensional thermoviscoelastic evolution problem involving temperature-dependent viscosities},
  year          = {2025},
  archiveprefix = {arXiv},
  eprint        = {2504.20473},
  primaryclass  = {math.AP},
  url           = {https://arxiv.org/abs/2504.20473},
doi = {10.48550/arXiv.2504.20473
}
}

@Unpublished{Claes,
  author = {Leander Claes and Michael Winkler},
  note   = {Preprint},
  title  = {Describing smooth small-data solutions to a quasilinear hyperbolic-parabolic system by W1,p energy analysis},
  doi    = {10.48550/arXiv.2510.21660},
  eprint = {arXiv:2510.21660},
}

@Article{Winklera,
  author   = {Winkler, Michael},
  journal  = {Z. Angew. Math. Phys.},
  title    = {Large-data solutions in one-dimensional thermoviscoelasticity involving temperature-dependent viscosities},
  year     = {2025},
  issn     = {0044-2275,1420-9039},
  number   = {5},
  pages    = {Paper No. 192, 29},
  volume   = {76},
  doi      = {10.1007/s00033-025-02582-y},
  fjournal = {Zeitschrift f\"ur Angewandte Mathematik und Physik. ZAMP. Journal of Applied Mathematics and Physics. Journal de Math\'ematiques et de Physique Appliqu\'ees},
  mrclass  = {74H20 (35D30 35L05 74F05)},
  mrnumber = {4955171},
  url      = {https://doi.org/10.1007/s00033-025-02582-y},
}

@Article{Friesen2024,
  author  = {Friesen, Olga and Claes, Leander and Scheidemann, Claus and Feldmann, N and Hemsel, T and Henning, Bernd},
  journal = {Journal of Physics: Conference Series},
  title   = {Estimation of temperature-dependent piezoelectric material parameters using ring-shaped specimens},
  year    = {2024},
  month   = {09},
  pages   = {012125},
  volume  = {2822},
  doi     = {10.1088/1742-6596/2822/1/012125},
}

@Article{Bies2023,
  author  = {Bies, Piotr and Cieślak, Tomasz},
  journal = {SIAM Journal on Mathematical Analysis},
  title   = {Global-in-Time Regular Unique Solutions with Positive Temperature to One-Dimensional Thermoelasticity},
  year    = {2023},
  month   = {11},
  pages   = {7024-7038},
  volume  = {55},
  doi     = {10.1137/23M1560550},
}

@Article{Gawinecki2016,
  author  = {Gawinecki, Jerzy and Zajaczkowski, Wojciech},
  journal = {Applicationes Mathematicae},
  title   = {On regular solutions to two-dimensional thermoviscoelasticity},
  year    = {2016},
  month   = {01},
  pages   = {1-27},
  volume  = {43},
  doi     = {10.4064/am2299-6-2016},
}

@Article{Gawinecki2016a,
  author  = {Gawinecki, Jerzy and Zajaczkowski, Wojciech},
  journal = {Communications on Pure and Applied Analysis},
  title   = {Global regular solutions to two-dimensional thermoviscoelasticity},
  year    = {2016},
  month   = {02},
  pages   = {1009-1028},
  volume  = {15},
  doi     = {10.3934/cpaa.2016.15.1009},
}

@Article{Pawlow2017,
  author  = {Pawlow, Irena and Zajaczkowski, Wojciech},
  journal = {Communications on Pure and Applied Analysis},
  title   = {Global regular solutions to three-dimensional thermo-visco-elasticity with nonlinear temperature-dependent specific heat},
  year    = {2017},
  month   = {07},
  pages   = {1331-1371},
  volume  = {16},
  doi     = {10.3934/cpaa.2017065},
}
\end{document}